\documentclass[12pt,reqno,twoside]{amsart}

\usepackage[T1]{fontenc}
\usepackage[utf8]{inputenc}
\usepackage{lmodern}
\usepackage{microtype}
\usepackage[a4paper,inner=1in,outer=1in,top=1.2in,bottom=1.2in]{geometry}

\usepackage{setspace}
\usepackage{amsmath,amssymb,amsthm,mathtools}
\usepackage{bm}
\usepackage{mathrsfs}
\usepackage{dsfont}

\usepackage{graphicx,tikz,caption,subcaption}
\usepackage{enumitem}

\usepackage[colorlinks=true,linkcolor=blue!50!black,citecolor=green!40!black,urlcolor=magenta!60!black]{hyperref}
\usepackage[nameinlink,capitalize]{cleveref}

\usepackage{bm}
\usepackage{amsbsy}

\numberwithin{equation}{section}

\allowdisplaybreaks
\theoremstyle{plain}
\newtheorem{Th}{Theorem}[section]
\newtheorem{Lem}[Th]{Lemma}

\newtheorem{Cor}[Th]{Corollary}

\theoremstyle{definition}
\newtheorem{Def}[Th]{Definition}
\newtheorem{Rem}[Th]{Remark}

\DeclareMathOperator{\supp}{supp}

\begin{document}
\title[A Hardy--Rellich Biharmonic Heat Equation]{Double Criticality for a Hardy--Rellich Biharmonic Heat Equation in an Exterior Domain}

\author[H. Alhatlani]{Hadeel Alhatlani}
\address{(H. Alhatlani) Department of Mathematics, College of Science, King Saud University, Riyadh 11451, Saudi Arabia}
\email{444203494@student.ksu.edu.sa}

\author[M. Jleli]{Mohamed Jleli}
\address{(M. Jleli) Department of Mathematics, College of Science, King Saud University, Riyadh 11451, Saudi Arabia}
\email{jlelimohamed@gmail.com}

\author[B. Samet]{Bessem Samet}
\address{(B. Samet) Department of Mathematics, College of Science, King Saud University, Riyadh 11451, Saudi Arabia}
\email{bsamet@ksu.edu.sa}

\keywords{Biharmonic heat equation; Hardy--Rellich potential; exterior domain; nonexistence; critical exponent; source decay}
\renewcommand{\subjclassname}{\textbf{2020 Mathematics Subject Classification}}
\subjclass[2020]{35K25, 35K55, 35B44, 35B33}

\begin{abstract} 
We study the existence and nonexistence of weak solutions to an inhomogeneous semilinear biharmonic heat equation in an exterior domain, involving a singular Hardy--Rellich potential, a weighted nonlinearity $|x|^{\sigma}|u|^{p}$, and a positive source term $f(x)$. We identify two distinct critical regimes governing the behavior of solutions. More precisely, we first determine a Fujita-type critical exponent that separates nonexistence from existence. We then show that, in the supercritical range, a second critical exponent arises in terms of the decay exponent of the source, in the sense of Lee and Ni. Our results extend the recent work \cite{Tobakhanov} by considering a singular Hardy--Rellich potential and a weighted nonlinearity, leading to a different critical behavior.
\end{abstract}

\maketitle

\tableofcontents

\section{Introduction and main results}

For $N \geq 5$, consider the exterior domain
\[
\Omega_e = \mathbb{R}^N \setminus \overline{B_1},
\]
where $B_1$ denotes the open unit ball in $\mathbb{R}^N$. In this paper, we are concerned with the semilinear biharmonic heat equation
\begin{equation}\label{P}
u_t + \Delta^2 u - \frac{\mu}{|x|^4} u = |x|^{\sigma} |u|^p + f(x),
\qquad t > 0,\ x \in \Omega_e,
\end{equation}
subject to the Navier boundary conditions
\begin{equation}\label{BC}
u = \Delta u = 0, \qquad t > 0,\ x \in \partial B_1.
\end{equation}
Here, $p > 1$, $\sigma > -4$, and $f > 0$ is a function in $L^1_{\mathrm{loc}}(\overline{\Omega_e})$. The parameter $\mu$ satisfies
\begin{equation}\label{mu-as}
0 < \mu \leq \mu^* = \left( \frac{N(N-4)}{4} \right)^2.
\end{equation}
The value $\mu^*$ is the best constant in the Hardy--Rellich inequality; see \cite{Tertikas-2007}.

Problem \eqref{P}--\eqref{BC} describes a fourth-order diffusion process in the exterior of the unit ball, involving a singular Hardy--Rellich potential, a nonlinear reaction term weighted by $|x|^\sigma$, and an external source term.

Our main goal is to investigate the critical behavior associated with the existence and nonexistence of weak solutions to \eqref{P}--\eqref{BC}.

Weak solutions to \eqref{P}--\eqref{BC} are understood in the following sense. We set
\[
D = (0,\infty) \times \overline{\Omega_e}, \qquad \Sigma = (0,\infty) \times \partial B_1.
\]
We consider the class of test functions
\[
\Phi = \left\{ \varphi \in C_c^{1,4}(D) : \varphi \geq 0,\ \varphi = \Delta \varphi = 0 \ \text{on } \Sigma \right\}.
\]
Here, $C_c^{1,4}(D)$ denotes the space of functions $\varphi(t,x)$ that are of class $C^1$ in $t$, $C^4$ in $x$, and have compact support in $D$.

\begin{Def}\label{def-ws}
A function $u \in L^p_{\mathrm{loc}}(D)$ is said to be  a weak solution to \eqref{P}--\eqref{BC} if
\[
\int_D |x|^{\sigma} |u|^p \varphi \, dx\, dt
+ \int_D f(x)\, \varphi \, dx\, dt
=
- \int_D u\, \varphi_t \, dx\, dt
+ \int_D u \left( \Delta^2 \varphi - \frac{\mu}{|x|^4} \varphi \right) dx\, dt
\]
for all $\varphi \in \Phi$.
\end{Def}

The above weak formulation is obtained by multiplying \eqref{P} by $\varphi \in \Phi$, integrating over $D$, and integrating by parts, using the Navier boundary conditions.

Before stating our main results, we  recall some works related to the present study. It is well known that semilinear problems possess a critical behavior, first identified in the seminal work of Fujita \cite{Fujita}. In his pioneering work, Fujita considered the Cauchy problem for the semilinear heat equation
\[
u_t - \Delta u = u^p, \qquad t>0,\ x \in \mathbb{R}^N,
\]
supplemented with nonnegative initial data
\[
u(0,x) = u_0(x), \qquad x \in \mathbb{R}^N.
\]
He proved that:
\begin{itemize}
\item[(i)] If $1 < p < 1 + \frac{2}{N}$ and $u_0 \ge 0$, $u_0 \not\equiv 0$, then the problem admits no global positive solution.
\item[(ii)] If $p > 1 + \frac{2}{N}$ and $u_0$ is sufficiently small (for instance, dominated by a Gaussian), then the problem admits global positive solutions.
\end{itemize}
This result shows that 
\[
p_F = 1 + \frac{2}{N}
\] 
is a critical exponent, known as the Fujita critical exponent, which separates existence from nonexistence of global solutions. Moreover, the critical value $p = p_F$ lies in case (i) (see Hayakawa~\cite{Hayakawa} for $N = 1,2$, and Sugitani~\cite{Sugitani} and Kobayashi et al.~\cite{Kobayashi} for $N \ge 3$). 

Subsequently, Lee and Ni~\cite{Lee} studied the behavior of solutions to the above problem in the supercritical range $p > p_F$, assuming that $u_0 \ge 0$ and
\[
u_0(x) \sim |x|^{-\eta} \quad \text{as } |x| \to \infty.
\]
They established that solutions cannot exist globally when $\eta < \frac{2}{p-1}$, while global solutions do exist provided that $\eta \ge \frac{2}{p-1}$ and the initial data is sufficiently small. The value
\[
\eta_{\mathrm{crit}} = \frac{2}{p-1}
\]
is referred to as the second critical exponent.

Consider now the semilinear biharmonic heat equation
\[
u_t + \Delta^2 u = |u|^p, \qquad t>0,\ x \in \mathbb{R}^N.
\]
The results of Egorov et al. \cite{Egorov} show that, if $1 < p \leq 1 + \frac{4}{N}$, then for any initial datum with positive integral, the corresponding solution does not exist globally. Moreover, if $p > 1 + \frac{4}{N}$, there exist global solutions for suitable initial data; see Galaktionov and Pohozaev \cite{Galaktionov}. Further related results can be found in  \cite{Caristi,Gazzola,Philippin}. 

Fujita-type results have also been established in various domains other than $\mathbb{R}^N$. In particular, numerous results have been obtained in exterior domains. For instance, Bandle and Levine \cite{Bandle} studied the semilinear heat equation
\[
u_t - \Delta u = u^p\,\,(u\geq 0), \qquad t>0,\ x \in \mathbb{R}^N \setminus \Omega,
\]
under the Dirichlet boundary condition 
\begin{equation}\label{DRbc}
u(t,x)=0,\qquad  t>0,\ x \in \partial \Omega,
\end{equation}
where $\Omega$ is a bounded domain in $\mathbb{R}^N$. It was shown that the Fujita critical exponent remains unchanged in this setting. For the critical case $p = p_F$, we refer to Ikeda and Sobajima \cite{Ikeda} in the case $N = 2$, and to Suzuki \cite{Suzuki} for $N \geq 3$. In a subsequent work, Levine and Zhang \cite{Levine} studied the same problem under the homogeneous Neumann boundary condition
\[
\frac{\partial u}{\partial \nu} = 0,\qquad  t>0,\ x \in \partial \Omega.
\]
They showed that the Fujita critical exponent remains $p_F$ in this case as well. Further related results can be found in \cite{Borikhanov,JS,Rault,Sun,Zhang}.

Bandle et al. \cite{Bandle2} studied the corresponding inhomogeneous problem
\[
u_t - \Delta u = |u|^p + f(x), \qquad t>0,\ x \in \mathbb{R}^N \setminus \Omega,
\]
subject to the  Dirichlet boundary condition \eqref{DRbc}.   They showed that the presence of the inhomogeneous term leads to a different critical behavior, with critical exponent $1+\frac{2}{N-2}$ ($N\geq 3$), which is larger than the Fujita exponent.

Very recently, Tobakhanov and Torebek \cite{Tobakhanov} studied the special case of \eqref{P} corresponding to $\mu = 0$ and $\sigma = 0$. Namely, they considered the semilinear biharmonic heat equation
\begin{equation}\label{P-TT}
u_t + \Delta^2 u = |u|^p + f(x),
\qquad t > 0,\ x \in \Omega_e,
\end{equation} 
under different types of homogeneous boundary conditions. In particular, under the Navier boundary conditions \eqref{BC}, they obtained the following results (for $N\geq 5$):
\begin{itemize}
	\item[(i)] If $1 < p \leq 1 + \frac{4}{N-4}$, then no weak solution exists, provided that
\[
\int_{\Omega_e}  f(x)A(x)\,dx > 0
\]
for a certain weight $A$.
\item[(ii)] If $p > 1 + \frac{4}{N-4}$, then there exist stationary solutions for some $f > 0$.
\end{itemize}
Thus, 
\[
1 + \frac{4}{N-4}
\]
is the critical exponent separating nonexistence from existence.

In the same work \cite{Tobakhanov}, the authors also analyzed the critical behavior in the sense  by Lee and Ni~\cite{Lee}. Namely, for 
\[
p>1 + \frac{4}{N-4},
\]
they proved the following results:

\begin{itemize}
	\item[(i)] Let $f \geq 0$ be continuous and satisfy $f(x) \geq c |x|^{-\eta}$ for sufficiently large $|x|$. If
\[
\eta < \frac{4p}{p-1},
\]
then no weak solution exists.
\item[(ii)] If
\[
\frac{4p}{p-1} \leq \eta < N,
\]
then the problem admits stationary solutions for some $f \geq 0$ that is continuous and satisfies $f(x) \leq c |x|^{-\eta}$ for sufficiently large $|x|$.
\end{itemize}

The above results reveal a second critical behavior, characterized by the critical decay rate
\[
\eta_{\mathrm{crit}} = \frac{4p}{p-1}.
\]

Semilinear heat equations involving singular potentials, and in particular Hardy-type potentials, have also attracted considerable attention. For instance, Abdellaoui et al. \cite{Abdellaoui} considered semilinear heat equations of the form
\[
u_t - \Delta u - \frac{\lambda}{|x|^2}u = u^p + f(x), \qquad t > 0,\ x \in \Omega,
\]
under the Dirichlet boundary condition
\[
u(t,x) = 0, \qquad t > 0,\ x \in \partial \Omega,
\]
where $\Omega \subset \mathbb{R}^N$, $N \geq 3$, is a bounded regular domain containing the origin, $p > 1$, and $u_0, f \geq 0$ belong to suitable function classes. It was shown that if $\lambda > 0$, there exists a critical exponent $p_+(\lambda)$ such that, for $p \geq p_+(\lambda)$, no solution exists for any nontrivial initial datum. Further related results can be found in \cite{Abdellaoui16,Abdellaoui2,JS24}, as well as in the references therein.

The elliptic counterpart has also been extensively investigated, particularly for equations involving singular potentials of Hardy type; see, for example, \cite{Brezis,Chen,Chen2,D'Ambrosio} and the references therein.

In this paper, motivated by the recent work of Tobakhanov and Torebek~\cite{Tobakhanov}, we investigate the critical behavior associated with \eqref{P}--\eqref{BC}. We show that the problem possesses  a double critical structure: a first critical behavior governed by the exponent $p$, in the spirit of Fujita, and a second one governed by the decay rate of the source term, in the sense of Lee and Ni, arising from the presence of a singular Hardy--Rellich potential. To the best of our knowledge, this type of double critical behavior has not been previously established for biharmonic heat equations with a Hardy--Rellich potential in exterior domains.

We now state our main results. For $0 < \mu \le \mu^*$, we introduce the parameter $\mu_N$ defined by
\[
\mu_N = \frac{N-4}{2} + \sqrt{
\left(\frac{N-2}{2}\right)^2 + 1 - \sqrt{(N-2)^2 + \mu}
}.
\]

Our first main result reads as follows.

\begin{Th}\label{T1.2}
Let $N \ge 5$ and $\sigma > -4$.
\begin{itemize}
\item[\rm(i)] Assume that either
\begin{equation}\label{cd-nex}
0 < \mu \le \mu^*, \qquad 1 < p < 1 + \frac{\sigma + 4}{\mu_N},
\end{equation}
or
\begin{equation}\label{cd-nex-cr}
0 < \mu < \mu^*, \qquad p = 1 + \frac{\sigma + 4}{\mu_N}.
\end{equation}
Then, for every positive function $f \in L^1_{\mathrm{loc}}(\overline{\Omega_e})$, problem \eqref{P}--\eqref{BC} admits no weak solution.

\item[\rm(ii)] If $0 < \mu \le \mu^*$ and
\begin{equation}\label{cd-ex}
p > 1 + \frac{\sigma + 4}{\mu_N},
\end{equation}
then problem \eqref{P}--\eqref{BC} admits stationary solutions $u \in C^\infty(\overline{\Omega_e})$ for some positive function $f$.
\end{itemize}
\end{Th}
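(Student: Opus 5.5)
The plan is a test-function (nonlinear capacity) argument for (i) and an explicit power-type stationary solution for (ii). Both rest on the radial structure of $L=\Delta^2-\mu|x|^{-4}$. For $r=|x|$ one has $L\,r^{-\alpha}=\bigl(P(\alpha)-\mu\bigr)r^{-\alpha-4}$ with
\[
P(\alpha)=\alpha(\alpha+2)(N-2-\alpha)(N-4-\alpha).
\]
$P$ is symmetric about $(N-4)/2$ and its value there is $\mu^*$. So for $0<\mu<\mu^*$ the equation $P(\alpha)=\mu$ has two roots $\gamma_1<\tfrac{N-4}{2}<\gamma_2$ in $(0,N-4)$, with $\gamma_1+\gamma_2=N-4$. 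Solving the biquadratic in $\alpha-\tfrac{N-4}{2}$, I expect to get $\gamma_2=\mu_N$ and $\gamma_1=N-4-\mu_N$. For $\mu=\mu^*$ the two roots coincide.

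\textbf{Step 1: the weight.} I would first build a positive $H\in C^\infty(\overline{\Omega_e})$ with $LH=0$ in $\Omega_e$ and $H=\Delta H=0$ on $\partial B_1$. The natural choice is $H=r^{-\gamma_1}-c_1r^{-\gamma_2}-c_2r^{-a}-c_3r^{-b}$, using all four roots of $P(\alpha)=\mu$. The coefficients are fixed by the two Navier conditions, arranged so that $H$ is positive and $H\sim r^{-\gamma_1}$ at infinity. Checking this positivity is the first technical point.

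\textbf{Step 2: nonexistence for $p<1+(\sigma+4)/\mu_N$.} I would test the weak formulation with
\[
\varphi(t,x)=H(x)\,\xi^k(x/R)\,\eta^k(t/T),
\]
where $\xi$ is a cutoff equal to $1$ on $B_1$ and $0$ outside $B_2$, and $\eta$ is a time cutoff. Since $LH=0$, the term $L(H\xi^k)$ only involves derivatives of $\xi$. It is supported in $R<|x|<2R$ and bounded by $CR^{-4}H$ there, using $|\nabla^jH|\lesssim r^{-j}H$.

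Young's inequality with weight $|x|^\sigma\varphi$ absorbs the $u$-terms into $\int|x|^\sigma|u|^p\varphi$. This leaves
\[
T\int f H\xi^k \lesssim T\,R^{N-\gamma_1-4-\frac{\sigma+4}{p-1}}+R^{N-\gamma_1+\frac{-\sigma}{p-1}}\,T^{1-p'}.
\]
I divide by $T$ and let $T\to\infty$. Then, since $N-\gamma_1-4=\mu_N$, the exponent of $R$ is negative exactly when $p<1+(\sigma+4)/\mu_N$. Letting $R\to\infty$ gives $\int fH\le 0$, which contradicts $f>0$.

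\textbf{Step 3: the critical case.} When $p=1+(\sigma+4)/\mu_N$ the same estimate bounds $\int|x|^\sigma|u|^pH$ uniformly, so this integral is finite. I then redo the estimate with Hölder's inequality instead of Young's. The annulus contribution is then controlled by $\bigl(\int_{R<|x|<2R}|x|^\sigma|u|^pH\bigr)^{1/p}$, which tends to $0$ by dominated convergence, and again $\int fH\le 0$.

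This is where $\mu<\mu^*$ is used. At $\mu^*$ the roots coincide, the weight acquires a logarithm ($H\sim r^{-\gamma_1}\log r$), and the balance of powers breaks. I expect the main obstacle to lie in Steps 1 and 3: constructing a positive $H$ that satisfies both Navier conditions, and controlling $|\nabla^j H|$ well enough for the annulus bounds.

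\textbf{Step 4: existence for $p>1+(\sigma+4)/\mu_N$.} Set $\alpha=(\sigma+4)/(p-1)<\mu_N$ and look for $u=\varepsilon w$ with $w>0$ smooth, $w=\Delta w=0$ on $\partial B_1$, and $w\asymp r^{-\alpha}$ at infinity. If $\gamma_1<\alpha<\mu_N$, then $P(\alpha)>\mu$, so $Lr^{-\alpha}=c\,r^{-\alpha-4}$ with $c>0$. Since $r^\sigma r^{-\alpha p}=r^{-\alpha-4}$, the choice $u=\varepsilon r^{-\alpha}$ makes $f=Lu-r^\sigma u^p=(c\varepsilon-\varepsilon^p)r^{-\alpha-4}>0$ for small $\varepsilon$, at least away from the boundary. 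If $\alpha\le\gamma_1$, I would instead pick any $\alpha'\in(\gamma_1,\mu_N)$ with $\alpha'\ge\alpha$. Then $r^\sigma u^p$ decays at least like $r^{-\alpha'-4}$, so it is still dominated by $Lu$.

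To meet the Navier conditions, I would modify $r^{-\alpha'}$ near $|x|=1$ by adding decaying powers $r^{-\beta}$ with large $\beta$, chosen so that $u=\Delta u=0$ at $r=1$ while $Lu>0$ is preserved. Positivity of $Lu$ near the boundary is the delicate check here. One can alternatively absorb a defect near the boundary into $f$ by choosing $\varepsilon$ small. This yields a $C^\infty$ stationary solution with a positive source $f$.
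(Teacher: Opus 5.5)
Your Steps 1--2 are the paper's subcritical argument. Once the growing fourth root is discarded, your weight is a multiple of $H=c_\mu|x|^{\alpha_-}+(1-c_\mu)|x|^{\alpha_+}-|x|^{\beta_-}$ with $c_\mu<0$, and the count $N-\gamma_1-4=\mu_N$ is the same.

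\textbf{The gap is in part (ii).} First, your construction does not exist for $\mu=\mu^*$. There $\gamma_1=\gamma_2=\mu_N=\frac{N-4}{2}$, so the interval $(\gamma_1,\mu_N)$ is empty. In fact $P(q)<\mu^*$ for every $q\in(0,N-4)$ with $q\neq\frac{N-4}{2}$, so every positive power $r^{-q}$ in this range has $L(r^{-q})<0$. The paper gives up positivity of $u$, which costs nothing because the nonlinearity is $|u|^p$. It takes $u_\varepsilon=\varepsilon V_q$ with $V_q=-|x|^{-q}+|x|^{\alpha}+B_q|x|^{\alpha}\ln|x|$, where $\alpha=-\frac{N-4}{2}$ and $\frac{\sigma+4}{p-1}<q<\frac{N-4}{2}$. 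This $V_q$ is negative at infinity and satisfies $\mathcal{L}_{\mu^*}V_q=(\mu^*-P(q))|x|^{-q-4}>0$ (your $P$).

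Second, for $\mu<\mu^*$ you leave the Navier correction open, and your fallback fails. Since $f_\varepsilon=\varepsilon\bigl(Lw-\varepsilon^{p-1}|x|^{\sigma}|w|^p\bigr)$, $f_\varepsilon$ has the sign of $Lw$ as $\varepsilon\to 0$, so a small $\varepsilon$ cannot absorb any region where $Lw<0$. Generic large exponents do create such a region. With two exponents $\beta_1<\beta_2$ beyond the largest root, the Navier conditions force coefficients of opposite signs, and one checks that $Lw<0$ at distance $O(1/\beta_1)$ from $r=1$.

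The missing idea is to correct $|x|^{-q}$ only by decaying solutions of $Lv=0$, namely $V_q=|x|^{-q}+A_q|x|^{\alpha_-}+B_q|x|^{\beta_-}$. Then $LV_q$ is exactly a positive multiple of $|x|^{-q-4}$ and nothing needs checking near $r=1$. The $2\times 2$ system for $A_q,B_q$ is solvable because $M(\alpha_-)\neq M(\beta_-)$, where $M(\lambda)=\lambda(\lambda+N-2)$.

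\textbf{Step 3 needs an extra argument in the parabolic setting.} With a time-independent source, your Young estimate bounds $\iint|x|^\sigma|u|^pH\,\xi^k(x/R)\,\eta^k(t/T)$ only by $CT$. So the finite quantities are $T$-dependent time averages, and there is no fixed integrable function for dominated convergence. This can be repaired. Choose $T_j\to\infty$ along which the normalized masses of the dyadic annuli converge; the limits have bounded sum, hence tend to $0$. The paper avoids the issue with the logarithmic cutoff $\zeta_R=\zeta^\ell\bigl(\frac{2\ln|x|}{\ln R}-1\bigr)$. Each derivative then gains a factor $(\ln R)^{-1}$, and at $p=p_{\mathrm{crit}}$ the radial integrand is exactly $r^{-1}$ on $R^{1/2}<r<R$. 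Hence $\mathcal{C}_2(\varphi)\le CT(\ln R)^{1-p'}$, and Young's inequality alone suffices.

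Two smaller omissions. You never prove $H\ge 0$. The paper proves it by convexity of $F(t)=e^{-\beta_-t}H(e^t)$, and at $\mu=\mu^*$ by monotonicity for $H=|x|^{\alpha}+d_{\mu^*}|x|^{\alpha}\ln|x|-|x|^{\beta_-}$. Also, your time cutoff must be supported in $(0,1)$, since functions in $\Phi$ have compact support in $(0,\infty)\times\overline{\Omega_e}$ and the weak formulation has no initial term.
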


\begin{Rem}
Theorem~\ref{T1.2} shows that there exists a critical exponent
\[
p_{\mathrm{crit}}=p_{\mathrm{crit}}(\mu,\sigma) = 1 + \frac{\sigma + 4}{\mu_N},
\]
which governs the behavior of solutions. More precisely, this exponent separates the nonexistence and existence regimes: if $1 < p <p_{\mathrm{crit}}$, then no weak solution exists, whereas if $p > p_{\mathrm{crit}}$, stationary solutions do exist for suitable positive sources.
\end{Rem}

\begin{Rem}
The critical exponent $p_{\mathrm{crit}}$ depends explicitly on the parameter $\mu$ through $\mu_N$. In comparison with the case $\mu = 0$ and $\sigma = 0$ studied by Tobakhanov and Torebek~\cite{Tobakhanov}, where the critical exponent is
\[
1 + \frac{4}{N-4},
\]
we see that the presence of  the Hardy--Rellich potential  changes the form of the critical exponent. In particular, when $\sigma = 0$, one has
\[
p_{\mathrm{crit}}(\mu,0) \downarrow 1 + \frac{4}{N-4}
\qquad \text{as } \mu \to 0^+.
\]
Moreover, when $\mu=\mu^*$, one has
\[
p_{\mathrm{crit}}(\mu^*,\sigma)=1 + \frac{2(\sigma + 4)}{N-4},\qquad p_{\mathrm{crit}}(\mu^*,0)=1 + \frac{8}{N-4}.
\]
\end{Rem}

\begin{Rem}
The classification given in Theorem~\ref{T1.2} is complete in the case $0 < \mu < \mu^*$. However, when $\mu = \mu^*$, the critical case $p = p_{\mathrm{crit}}$ is not covered by our result. The difficulty is related to the degeneracy of the associated operator at the critical value $\mu = \mu^*$. The behavior of solutions in this situation remains an open question.
\end{Rem}

Consider now the associated fourth-order elliptic problem obtained from \eqref{P} by removing the time variable, namely,
\begin{equation}\label{E}
\begin{cases}
\displaystyle \Delta^2 u - \frac{\mu}{|x|^4} u = |x|^{\sigma} |u|^p + f(x), & x \in \Omega_e, \\[6pt]
u = \Delta u = 0, & x \in \partial B_1.
\end{cases}
\end{equation}

From Theorem~\ref{T1.2}, we deduce the following result for problem \eqref{E}.

\begin{Cor}
Let $N \ge 5$ and $\sigma > -4$.
\begin{itemize}
\item[\rm(i)] Assume that either \eqref{cd-nex} or \eqref{cd-nex-cr} holds. Then problem \eqref{E} admits no weak solution for every positive function $f \in L^1_{\mathrm{loc}}(\overline{\Omega_e})$.

\item[\rm(ii)] If $0 < \mu \le \mu^*$ and \eqref{cd-ex} holds, then problem \eqref{E} admits smooth solutions $u \in C^\infty(\overline{\Omega_e})$ for some positive function $f$.
\end{itemize}
\end{Cor}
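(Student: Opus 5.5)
The corollary follows from Theorem~\ref{T1.2} by passing between stationary and time-dependent solutions. The weak formulation for \eqref{E} is understood analogously to Definition~\ref{def-ws}: $u\in L^p_{\mathrm{loc}}(\overline{\Omega_e})$ satisfies
\[
\int_{\Omega_e} |x|^{\sigma}|u|^p\psi\,dx+\int_{\Omega_e} f\psi\,dx=\int_{\Omega_e} u\Big(\Delta^2\psi-\frac{\mu}{|x|^4}\psi\Big)dx
\]
for all $\psi\in C_c^4(\overline{\Omega_e})$ with $\psi\ge 0$ and $\psi=\Delta\psi=0$ on $\partial B_1$.

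For part (i), I argue by contradiction. Suppose $u=u(x)$ is a weak solution of \eqref{E} and set $U(t,x)=u(x)$. Then $U\in L^p_{\mathrm{loc}}(D)$. Take any $\varphi\in\Phi$. For each fixed $t>0$, the function $\psi=\varphi(t,\cdot)$ is an admissible elliptic test function: it is nonnegative, $C^4$ in $x$, compactly supported in $\overline{\Omega_e}$, and satisfies the Navier conditions. Plugging it into the elliptic identity and integrating over $t\in(0,\infty)$ reproduces every term of Definition~\ref{def-ws} except $-\int_D U\varphi_t$. That term vanishes, since
\[
-\int_D U\varphi_t\,dx\,dt=-\int_{\Omega_e}u(x)\Big(\int_0^\infty\varphi_t(t,x)\,dt\Big)dx=0,
\]
because $\varphi$ has compact support in $t\in(0,\infty)$. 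Fubini is justified because $u\varphi_t$ and $|x|^\sigma|u|^p\varphi$ are integrable on the compact support of $\varphi$. So $U$ is a weak solution of \eqref{P}--\eqref{BC}, which contradicts Theorem~\ref{T1.2}(i) under \eqref{cd-nex} or \eqref{cd-nex-cr}.

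For part (ii), Theorem~\ref{T1.2}(ii) provides stationary solutions $u\in C^\infty(\overline{\Omega_e})$ for some positive $f$. A stationary solution of \eqref{P}--\eqref{BC} is by definition a time-independent $u$ satisfying \eqref{E} together with the Navier conditions, so these are exactly smooth solutions of \eqref{E}.

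The only step needing care is checking that, in part (i), time slices of $\Phi$-functions are admissible elliptic test functions, together with the Fubini justification. Both are routine.
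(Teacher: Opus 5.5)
Your proposal is correct and follows the paper's route. The paper only states that the corollary is deduced from Theorem~\ref{T1.2}, and your argument is exactly that deduction with the routine details written out: lifting a weak solution of \eqref{E} to $U(t,x)=u(x)$, which satisfies Definition~\ref{def-ws} because $\int_0^\infty\varphi_t\,dt=0$, and noting that the stationary solutions of Theorem~\ref{T1.2}(ii) are classical solutions of \eqref{E}. You also supply the admissibility of time slices, the Fubini justification, and the natural elliptic weak formulation, all of which the paper leaves implicit.
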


We next refine our analysis by taking into account the decay rate of the source term. For
\[
p > 1 + \frac{\sigma + 4}{\mu_N}, \qquad \eta < \mu_N + 4,
\]
we introduce the classes
\[
\mathcal{F}^+_{\eta}
=
\left\{
f \in C(\overline{\Omega_e}) :\ 
f > 0,\ 
f(x) \ge c |x|^{-\eta} \text{ for sufficiently large } |x|
\right\}
\]
and
\[
\mathcal{F}^-_{\eta}
=
\left\{
f \in C(\overline{\Omega_e}) :\ 
f > 0,\ 
f(x) \le c |x|^{-\eta} \text{ for sufficiently large } |x|
\right\},
\]
where $c > 0$ is a constant.

Our second main result is stated as follows.
\begin{Th}\label{T1.3}
Let $N \ge 5$, $\sigma > -4$, $0 < \mu \le \mu^*$, and assume that \eqref{cd-ex} holds.
\begin{itemize}
\item[\rm(i)] If
\begin{equation}\label{cd-nex-eta}
\eta < 4 + \frac{\sigma + 4}{p-1},
\end{equation}
then, for every $f \in \mathcal{F}^+_{\eta}$, problem \eqref{P}--\eqref{BC} admits no weak solution.

\item[\rm(ii)] If
\begin{equation}\label{cd-ex-eta}
4 + \frac{\sigma + 4}{p-1} \le \eta < \mu_N + 4,
\end{equation}
then problem \eqref{P}--\eqref{BC} admits stationary solutions $u \in C^\infty(\overline{\Omega_e})$ for some $f \in \mathcal{F}^-_{\eta}$.
\end{itemize}
\end{Th}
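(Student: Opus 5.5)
We prove part (i) with the rescaled test-function method, using a positive $L$-harmonic weight adapted to the Navier condition, and part (ii) by an explicit stationary supersolution-type ansatz. Throughout we write
\[
L\varphi=\Delta^2\varphi-\frac{\mu}{|x|^4}\varphi,\qquad
\Delta^2 |x|^{-\lambda}=P(\lambda)|x|^{-\lambda-4},\qquad P(\lambda)=\lambda(\lambda+2)(\lambda+2-N)(\lambda+4-N).
\]
The polynomial $P$ is symmetric about $\lambda=\frac{N-4}{2}$, where it attains the value $\mu^*$ on $[0,N-4]$. Hence the roots of $P(\lambda)=\mu$ in $[0,N-4]$ are $\theta:=N-4-\mu_N$ and $\mu_N$, with $\theta\le\mu_N$. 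Moreover $P-\mu>0$ on $(\theta,\mu_N)$, and $P-\mu<0$ on $[0,\theta)$.

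\emph{Step 1: the weight $H$.} I will first construct a radial $H\ge 0$ on $\overline{\Omega_e}$ with $LH=0$ in $\Omega_e$, $H=\Delta H=0$ on $\partial B_1$, and $H(x)\sim |x|^{-\theta}$ as $|x|\to\infty$. Presumably this is the same weight used in the proof of Theorem~\ref{T1.2}, so I would take it from there. It is a linear combination of the radial powers $|x|^{-\lambda}$ with $P(\lambda)=\mu$ (or $|x|^{-\theta}$ and $|x|^{-\theta}\log|x|$ when $\mu=\mu^*$), with the two coefficients fixed by the Navier conditions.

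\emph{Step 2: nonexistence (i).} Take $\varphi(t,x)=H(x)\xi(|x|/R)^{k}\eta(t/T)^{k}$, where $\xi\equiv1$ on $[0,1]$, $\xi\equiv 0$ on $[2,\infty)$, $\eta$ is a time cutoff, and $k$ is large. Since $\xi\equiv1$ near $\partial B_1$, we have $\varphi\in\Phi$. Because $LH=0$, $L\varphi$ only involves derivatives falling on the cutoff, so it is supported in $R<|x|<2R$ and bounded by $CR^{-4}H\xi^{k-4}\eta^k$. Inserting $\varphi$ in Definition~\ref{def-ws} and applying Young's inequality to $\int u(-\varphi_t+L\varphi)$ absorbs the nonlinear term and gives
\[
\int_D f\varphi \le C\int_D |x|^{-\frac{\sigma}{p-1}}\varphi^{-\frac1{p-1}}\big(|\varphi_t|^{p'}+|L\varphi|^{p'}\big).
\]
With $f\ge c|x|^{-\eta}$ and $\eta<\mu_N+4$ (so that $N-\eta-\theta>0$), the left-hand side is at least $cT R^{N-\eta-\theta}$. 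The right-hand side is bounded by
\[
C T^{1-p'}R^{N-\theta-\frac{\sigma}{p-1}}+C\,T R^{N-\theta-4p'-\frac{\sigma}{p-1}}.
\]
Choosing $T=R^4$ and dividing by $T$ yields
\[
R^{-\eta}\le C R^{-4-\frac{\sigma+4}{p-1}}.
\]
Under \eqref{cd-nex-eta} this fails as $R\to\infty$, which gives a contradiction. Positivity of $f$ on the bounded region is not needed here; only the lower bound at infinity is used.

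\emph{Step 3: existence (ii).} Set $\alpha=\frac{\sigma+4}{p-1}$; note $\alpha<\mu_N$ by \eqref{cd-ex}. For $\mu<\mu^*$, pick $\beta\in(\theta,\mu_N)$ with $\beta\ge\max(\eta-4,\alpha)$, which is possible since $\eta-4<\mu_N$. Take $w$ smooth, positive in $\Omega_e$, with $w=\Delta w=0$ on $\partial B_1$ and $w=|x|^{-\beta}$ for $|x|\ge2$. Put $u=\varepsilon w$ and $f:=\varepsilon Lw-\varepsilon^p|x|^\sigma w^p$.

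For $|x|\ge 2$ we get $f=\varepsilon|x|^{-\beta-4}\big(P(\beta)-\mu-\varepsilon^{p-1}|x|^{\sigma+4-\beta(p-1)}\big)$. The exponent $\sigma+4-\beta(p-1)$ is $\le0$ because $\beta\ge\alpha$, so $f>0$ for $\varepsilon$ small. Also $f\le c|x|^{-\beta-4}\le c|x|^{-\eta}$, so $f\in\mathcal F^-_\eta$.

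For $\mu=\mu^*$ we have $\theta=\mu_N$, so I would instead use $w=|x|^{-\mu_N}(\log|x|)^{\gamma}$ with $\gamma\in(0,1)$. The leading term of $Lw$ is then $\tfrac12 P''(\mu_N)\gamma(\gamma-1)|x|^{-\mu_N-4}(\log|x|)^{\gamma-2}$, which is positive because $P''(\mu_N)<0$. Since $\eta<\mu_N+4$, this is still $\le c|x|^{-\eta}$. The nonlinear term is of lower order provided $\alpha<\mu_N$ or, in the borderline case $\eta-4=\alpha$, up to the log factors, which I would check separately.

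\emph{Main obstacle.} The delicate step is on the compact annulus $1\le|x|\le2$. There $w$ must satisfy the Navier conditions, be positive, and satisfy $Lw>0$, so that $f>0$ for small $\varepsilon$. I would first try an explicit radial combination
\[
w=r^{-\beta}-a\,r^{-\lambda_1}-b\,r^{-\lambda_2}\quad\text{on the whole of }\overline{\Omega_e},
\]
with $\lambda_1,\lambda_2$ chosen such that $P(\lambda_i)=\mu$ (so these terms lie in the kernel of $L$) and $a,b$ fitted to $w=\Delta w=0$ at $r=1$. Then $Lw=(P(\beta)-\mu)r^{-\beta-4}>0$ everywhere, and only positivity of $w$ on $(1,\infty)$ remains to be checked. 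If that fails, I would fall back on solving $Lw=g$ for a chosen positive $g$ by using the positivity of the Navier Green function for $L$, which is where the restriction $\mu\le\mu^*$ should enter.
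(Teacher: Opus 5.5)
\textbf{Parts that are fine.} Your part (i) is correct and is essentially the paper's argument: the same weight $H$ and the same Young-inequality reduction. Taking $T=R^4$ to balance the two error terms, instead of the paper's $T=R^{\vartheta}$ with $\vartheta$ large, is harmless. Two small points: the time cutoff must be supported in $(0,1)$, since functions in $\Phi$ vanish near $t=0$; and for $\mu=\mu^*$ the weight behaves like $|x|^{-\theta}\ln|x|$, not $|x|^{-\theta}$, but the logarithm appears on both sides and cancels.

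In part (ii) with $0<\mu<\mu^*$, your ``first try'' $w=r^{-\beta}-ar^{-\lambda_1}-br^{-\lambda_2}$ is exactly the paper's $V_q$. This requires $\lambda_1=\mu_N$ and $\lambda_2=-\beta_-$ in the paper's notation, i.e.\ the two kernel exponents larger than $\beta$; using $\theta$ would make $w\sim r^{-\theta}$ and spoil the control of the nonlinear term. The positivity of $w$ that you leave open is not needed: the reaction term is $|x|^{\sigma}|u|^p$, so you should write $|w|^p$, and the paper never checks the sign of $V_q$.

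\textbf{The gap: part (ii) for $\mu=\mu^*$.} In your normalization, $P(\beta)<\mu^*$ for every $\beta\in[0,N-4]\setminus\{\mu_N\}$. So the explicit combination you propose for the inner region gives $Lw=(P(\beta)-\mu^*)r^{-\beta-4}<0$ everywhere. The profile $|x|^{-\mu_N}(\log|x|)^{\gamma}$ is only an ansatz at infinity: it is not $C^1$ up to $\partial B_1$, and $\Delta w$ is unbounded there. You give no construction that meets the Navier conditions while keeping $Lw>0$ near $\partial B_1$. The Green-function fallback is only asserted; you would have to prove positivity of the Navier Green function of $\Delta^2-\mu^*|x|^{-4}$ in $\Omega_e$, plus the required decay of the solution. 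So this case is not established.

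\textbf{Missing idea.} The key observation is that $u$ need not be positive. The paper takes $u_\varepsilon=\varepsilon V_q$ with $V_q\sim-|x|^{-q}$ at infinity, namely
\[
V_q=-|x|^{-q}+|x|^{-\mu_N}+B_q|x|^{-\mu_N}\ln|x|,\qquad \max\Big\{\frac{\sigma+4}{p-1},\,\eta-4\Big\}<q<\mu_N ,
\]
with $B_q$ chosen so that $\Delta V_q=0$ on $\partial B_1$. Then $LV_q=(\mu^*-P(q))|x|^{-q-4}>0$ on all of $\Omega_e$ and $|V_q|\le C|x|^{-q}$, and the subcritical argument applies verbatim.

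\textbf{Alternative repair of your idea.} Your logarithmic approach can also be closed. In $t=\ln r$ one has $L\big(r^{-\mu_N}\phi(t)\big)=r^{-\mu_N-4}\big(\phi''''-K\phi''\big)$, with $K=\big(\frac{N-4}{2}\big)^2+\big(\frac N2\big)^2>6$. For $\phi=(1+t)^{\gamma}$, $0<\gamma<1$, this equals $\gamma(1-\gamma)(1+t)^{\gamma-4}\big[K(1+t)^2-(2-\gamma)(3-\gamma)\big]>0$ for all $t\ge 0$. Adding $c_1r^{-\mu_N}+c_2r^{\beta_-}$ to satisfy the Navier conditions leaves $Lw$ unchanged.
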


\begin{Rem}
Theorem~\ref{T1.3} establishes a second critical behavior in terms of the parameter $\eta$, in the sense of Lee and Ni. More precisely, the value
\[
\eta_{\mathrm{crit}} = 4 + \frac{\sigma + 4}{p-1}
\]
separates the nonexistence and existence regimes. In particular, the critical case $\eta = \eta_{\mathrm{crit}}$ belongs to the existence regime. This highlights a double critical structure, where the exponent $p$ and the decay rate of the source term jointly determine the behavior of solutions.
\end{Rem}

\begin{Rem}
In the particular case $\sigma = 0$ and as $\mu \to 0^+$, we obtain
\[
\eta_{\mathrm{crit}} = \frac{4p}{p-1},
\]
with the additional condition $\eta < N$. This coincides with the critical exponent obtained in~\cite{Tobakhanov}.
\end{Rem}

From Theorem~\ref{T1.3}, we deduce the following result for problem \eqref{E}.

\begin{Cor}
Let $N \ge 5$, $\sigma > -4$, $0 < \mu \le \mu^*$, and assume that \eqref{cd-ex} holds.
\begin{itemize}
\item[\rm(i)] If \eqref{cd-nex-eta} holds, then problem \eqref{E} admits no weak solution for every $f \in \mathcal{F}^+_{\eta}$.

\item[\rm(ii)] If \eqref{cd-ex-eta} holds, then problem \eqref{E} admits smooth solutions $u \in C^\infty(\overline{\Omega_e})$ for some $f \in \mathcal{F}^-_{\eta}$.
\end{itemize}
\end{Cor}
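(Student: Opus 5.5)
The statement is the elliptic corollary, and I would obtain it directly from Theorem~\ref{T1.3} by viewing stationary solutions as time-independent solutions of \eqref{P}--\eqref{BC}.

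\emph{Part (i).} Argue by contradiction. Suppose \eqref{cd-nex-eta} holds, $f\in\mathcal{F}^+_\eta$, and $u\in L^p_{\mathrm{loc}}(\overline{\Omega_e})$ is a weak solution of \eqref{E}. Here a weak solution means
\[
\int_{\Omega_e}|x|^\sigma|u|^p\psi\,dx+\int_{\Omega_e}f\psi\,dx=\int_{\Omega_e}u\Big(\Delta^2\psi-\frac{\mu}{|x|^4}\psi\Big)dx
\]
for all nonnegative $\psi\in C^4_c(\overline{\Omega_e})$ with $\psi=\Delta\psi=0$ on $\partial B_1$. Set $v(t,x)=u(x)$; then $v\in L^p_{\mathrm{loc}}(D)$. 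Take $\varphi\in\Phi$. For each fixed $t$, the slice $\varphi(t,\cdot)$ is admissible in the elliptic formulation. Integrating that identity in $t$ gives every term of Definition~\ref{def-ws} except $-\int_D v\varphi_t$. That remaining term vanishes, because
\[
\int_0^\infty\varphi_t(t,x)\,dt=0
\]
for every $x$, since $\varphi$ has compact support in $t\in(0,\infty)$. Fubini applies because $v\varphi_t\in L^1$, as $\varphi$ has compact support and $u$ is locally integrable. Hence $v$ is a weak solution of \eqref{P}--\eqref{BC}, which contradicts Theorem~\ref{T1.3}(i).

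\emph{Part (ii).} Theorem~\ref{T1.3}(ii) produces stationary solutions $u\in C^\infty(\overline{\Omega_e})$ for some $f\in\mathcal{F}^-_\eta$. By definition these are time-independent classical solutions of \eqref{E}, including the Navier conditions. So they are exactly smooth solutions of \eqref{E}, and nothing more needs to be shown.

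The only point needing care is the precise notion of weak solution for \eqref{E}, which the paper does not spell out. I would fix it as the time-independent analogue of Definition~\ref{def-ws} given above. I would then check that slices of functions in $\Phi$ are admissible test functions: they are $C^4$, compactly supported in $\overline{\Omega_e}$, nonnegative, and satisfy the Navier conditions. With that in place, both parts are routine.
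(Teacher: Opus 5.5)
Your proposal is correct and follows the paper's route. The paper gives no separate proof and simply says the corollary follows from Theorem~\ref{T1.3}; your argument fills in exactly the details it leaves implicit: you lift a weak solution of \eqref{E} to $v(t,x)=u(x)$, kill the time-derivative term using $\int_0^\infty\varphi_t\,dt=0$, and read the explicit stationary solutions $u_\varepsilon$ of Theorem~\ref{T1.3}(ii) as classical solutions of \eqref{E}, after fixing the elliptic weak formulation that the paper never states.
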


The nonexistence results rely on the construction of suitable functions in $\Phi$. We construct functions of the first kind for $1 < p < p_{\mathrm{crit}}$, and functions of the second kind for the critical case $p = p_{\mathrm{crit}}$. The presence of the Hardy--Rellich potential makes this construction more delicate than in~\cite{Tobakhanov}. In particular, a detailed analysis of the associated biharmonic equation
\[
\Delta^2 H - \frac{\mu}{|x|^4} H = 0 \quad \text{in } \Omega_e,
\]
subject to Navier boundary conditions, is required in both regimes $0 < \mu < \mu^*$ and $\mu = \mu^*$.

The nonexistence results rely on the construction of suitable functions in $\Phi$. We construct functions of the first kind for $1 < p < p_{\mathrm{crit}}$, and functions of the second kind for the critical case $p = p_{\mathrm{crit}}$. In contrast with the case $\mu = 0$ considered in~\cite{Tobakhanov}, the presence of the Hardy--Rellich potential fundamentally alters the structure of the problem and prevents a direct application of their approach. In particular, a detailed analysis of the associated biharmonic equation
\[
\Delta^2 H - \frac{\mu}{|x|^4} H = 0 \quad \text{in } \Omega_e,
\]
subject to Navier boundary conditions, is required in both regimes $0 < \mu < \mu^*$ and $\mu = \mu^*$.

For the existence part, we construct explicit stationary solutions by analyzing separately the cases $0 < \mu < \mu^*$ and $\mu = \mu^*$, where the structure of the associated operator differs significantly.

The structure of the paper is as follows. Section~\ref{sec2} is devoted to the construction of admissible functions in $\Phi$. We first study the associated homogeneous Hardy--Rellich-type equation and construct a suitable barrier function $H$ in the regimes $0 < \mu < \mu^*$ and $\mu = \mu^*$. We then build test functions of the first and second kinds, which are used in the proof of the nonexistence results. Section~\ref{sec3} is devoted to the derivation of auxiliary estimates required in the proof of the nonexistence results. Section~\ref{sec4} contains the proofs of Theorems~\ref{T1.2} and~\ref{T1.3}.

\section{Construction of admissible functions}\label{sec2}

In this section, we construct admissible test functions $\varphi \in \Phi$ for \eqref{P}--\eqref{BC}, which play a key role in the derivation of the nonexistence results.

\subsection{The homogeneous Hardy--Rellich--type equation}

For $\mu$ satisfying \eqref{mu-as}, we introduce the fourth-order differential operator
\[
\mathcal{L}_\mu = \Delta^2 - \frac{\mu}{|x|^4}.
\]
We then consider the homogeneous problem
\begin{equation}\label{HP}
\begin{cases}
\mathcal{L}_\mu H = 0, & x \in \Omega_e,\\
H = \Delta H = 0, & x \in \partial B_1,\\
H \geq 0, & x \in \Omega_e.
\end{cases}
\end{equation}

We work with the parameters
\[
a = \frac{N-4}{2}, \qquad
A(\mu) = (a+1)^2 + 1 - \sqrt{4(a+1)^2 + \mu}, \qquad
B(\mu) = (a+1)^2 + 1 + \sqrt{4(a+1)^2 + \mu}.
\]
We define the functions
\begin{equation}\label{Plambda}
P(\lambda) = \lambda(\lambda-2)(\lambda+N-2)(\lambda+N-4) - \mu,
\end{equation}
and
\begin{equation}\label{Mlambda}
M(\lambda) = \lambda(\lambda+N-2), \qquad \lambda \in \mathbb{R}.
\end{equation}

\subsubsection{The case $0 < \mu < \mu^*$}

Let
\[
\alpha_\pm = -a \pm \sqrt{A(\mu)}, \qquad
\beta_\pm = -a \pm \sqrt{B(\mu)}.
\]
Then $\alpha_\pm$ and $\beta_\pm$ are the four roots of the polynomial $P(\lambda)$. Moreover, one has
\[
\beta_- < \alpha_- < \alpha_+ < 0 < \beta_+.
\]
We set
\[
c_\mu = \frac{M(\beta_-) - M(\alpha_+)}{2(\alpha_- - \alpha_+)}.
\]

\begin{Lem}\label{L2.1}
If $0 < \mu < \mu^*$, then $c_\mu < 0$.
\end{Lem}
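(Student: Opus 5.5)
The plan is a direct computation: the sign of $c_\mu$ reduces to a polynomial identity in which the quantity that must be positive turns out to be exactly $\mu$. Since $\alpha_- - \alpha_+ = -2\sqrt{A(\mu)}$, and $A(\mu)>0$ for $0<\mu<\mu^*$ (this is what makes $\alpha_-<\alpha_+$, as recorded just before the lemma), the denominator of $c_\mu$ is strictly negative. So it suffices to prove
\[
M(\beta_-) - M(\alpha_+) > 0 .
\]

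\emph{Completing the square.} Note that $\frac{N-2}{2} = a+1$, so $M(\lambda) = \bigl(\lambda + a+1\bigr)^2 - (a+1)^2$. For $\lambda = -a \pm s$ we have $\lambda + a + 1 = 1 \pm s$. Writing $\sqrt{A}=\sqrt{A(\mu)}$ and $\sqrt{B}=\sqrt{B(\mu)}$, this gives
\[
M(\beta_-) - M(\alpha_+) = \bigl(1-\sqrt{B}\bigr)^2 - \bigl(1+\sqrt{A}\bigr)^2 = \bigl(\sqrt{B}-\sqrt{A}-2\bigr)\bigl(\sqrt{B}+\sqrt{A}\bigr).
\]
The second factor is positive, so the task reduces to showing $\sqrt{B} > \sqrt{A} + 2$.

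\emph{Reducing to $\mu>0$.} Both sides of $\sqrt{B} > \sqrt{A} + 2$ are nonnegative, so it is equivalent to $B > A + 4 + 4\sqrt{A}$. Set $R = \sqrt{4(a+1)^2+\mu}$, so that $B - A = 2R$. The inequality becomes $R - 2 > 2\sqrt{A}$.

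First, $R > 2(a+1) = N-2 \ge 3$ because $N\ge 5$, so $R - 2 > 0$. We may therefore square again, and the condition is equivalent to $(R-2)^2 > 4A$. Using $A = (a+1)^2 + 1 - R$ and $4(a+1)^2 = R^2 - \mu$, we get
\[
4A = R^2 - \mu + 4 - 4R,
\]
while $(R-2)^2 = R^2 - 4R + 4$. Hence $(R-2)^2 - 4A = \mu > 0$, which gives the strict inequality and therefore $c_\mu<0$.

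\emph{Where care is needed.} The only delicate point is the bookkeeping of signs: we need $A(\mu)>0$, which holds for $\mu<\mu^*$, to justify taking square roots and dividing, and we need $R>2$ to justify the second squaring. Once $M$ is written in completed-square form, the rest is mechanical.
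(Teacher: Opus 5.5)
Your argument is correct and follows the paper's route. You make the same reduction to $M(\beta_-)>M(\alpha_+)$ and use the same factorization: your $(\sqrt{B}+\sqrt{A})(\sqrt{B}-\sqrt{A}-2)$ is exactly the paper's $(\beta_--\alpha_+)(\beta_-+\alpha_++N-2)$, which reduces everything to $\sqrt{B(\mu)}>\sqrt{A(\mu)}+2$.

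The only difference is the final step. The paper just notes $\sqrt{A(\mu)}<a$ and $\sqrt{B(\mu)}>a+2$, both immediate from $\sqrt{4(a+1)^2+\mu}>2(a+1)$. You instead square twice to obtain the exact identity $(R-2)^2-4A(\mu)=\mu$. Either works.
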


\begin{proof}
Since $\alpha_- < \alpha_+$, it suffices to show that $M(\beta_-) > M(\alpha_+)$. By the definition of $M$, we have
\[
M(\beta_-) - M(\alpha_+)
= (\beta_- - \alpha_+)(\beta_- + \alpha_+ + N - 2).
\]
Since $\beta_- < \alpha_+$, the first factor is negative. It therefore suffices to prove that
\begin{equation}\label{claim}
\beta_- + \alpha_+ + N - 2 < 0.
\end{equation}
By the definitions of $\beta_-$ and $\alpha_+$, we have
\[
\beta_- + \alpha_+ + N - 2 = 2 + \sqrt{A(\mu)} - \sqrt{B(\mu)}.
\]
Using the inequalities $A(\mu) < a^2$ and $B(\mu) > (a+2)^2$, we obtain \eqref{claim}. The proof is complete.
\end{proof}

We now introduce the function
\begin{equation}\label{H-f}
H(x) = c_\mu |x|^{\alpha_-} + (1 - c_\mu) |x|^{\alpha_+} - |x|^{\beta_-},
\qquad x \in \overline{\Omega_e}.
\end{equation}

\begin{Lem}\label{L2.2}
Let $0 < \mu < \mu^*$. The function $H$ defined by \eqref{H-f} satisfies \eqref{HP}.
\end{Lem}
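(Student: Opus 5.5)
The plan is to verify the three requirements of \eqref{HP} separately: the equation, the Navier boundary conditions, and nonnegativity. Each verification reduces to the behaviour of the radial powers $|x|^\lambda$ together with the algebraic identities behind the definitions of $P$, $M$ and $c_\mu$.

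\emph{Step 1: the equation.} For $r=|x|>0$ and $\lambda\in\mathbb{R}$, the radial Laplacian gives
\[
\Delta r^\lambda=\lambda(\lambda+N-2)\,r^{\lambda-2}=M(\lambda)\,r^{\lambda-2}.
\]
Applying it twice yields
\[
\Delta^2 r^\lambda = M(\lambda)\,(\lambda-2)(\lambda+N-4)\,r^{\lambda-4},
\]
so that
\[
\mathcal L_\mu r^\lambda = P(\lambda)\,r^{\lambda-4}.
\]
The exponents $\alpha_\pm$ and $\beta_-$ are roots of $P$, which is stated earlier in the paper (and can be checked by writing $P$ as a quadratic in $(\lambda+a)^2$). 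By linearity, $\mathcal L_\mu H=0$ in $\Omega_e$.

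\emph{Step 2: boundary conditions.} At $r=1$ we have
\[
H(1)=c_\mu+(1-c_\mu)-1=0.
\]
By Step 1,
\[
\Delta H|_{r=1}=c_\mu M(\alpha_-)+(1-c_\mu)M(\alpha_+)-M(\beta_-)=c_\mu\bigl(M(\alpha_-)-M(\alpha_+)\bigr)+M(\alpha_+)-M(\beta_-).
\]
Next,
\[
M(\alpha_-)-M(\alpha_+)=(\alpha_--\alpha_+)(\alpha_-+\alpha_++N-2).
\]
Since $\alpha_-+\alpha_+=-2a=4-N$, the second factor equals $2$. Hence the bracket equals $2(\alpha_--\alpha_+)$, and the definition of $c_\mu$ gives exactly $\Delta H=0$ on $\partial B_1$. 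This identity is precisely why $c_\mu$ was chosen as it was.

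\emph{Step 3: positivity, the only step needing an idea.} I will factor out the dominant power and set
\[
g(r)=r^{-\alpha_+}H=c_\mu r^{\alpha_--\alpha_+}+(1-c_\mu)-r^{\beta_--\alpha_+},\qquad r\ge 1.
\]
Then $g(1)=0$, and
\[
g'(r)=c_\mu(\alpha_--\alpha_+)\,r^{\alpha_--\alpha_+-1}-(\beta_--\alpha_+)\,r^{\beta_--\alpha_+-1}.
\]
By Lemma~\ref{L2.1}, $c_\mu<0$, and $\alpha_--\alpha_+<0$, so the first coefficient is positive. Since $\beta_-<\alpha_+$, the second term $-(\beta_--\alpha_+)\,r^{\beta_--\alpha_+-1}$ is also positive. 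Hence $g'>0$ on $(1,\infty)$, so $g(r)>0$ for $r>1$, and therefore $H>0$ in $\Omega_e$.

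The main obstacle was anticipated to be this sign analysis, since $H$ mixes positive and negative coefficients. Monotonicity of $g$, which rests on Lemma~\ref{L2.1} and the root ordering $\beta_-<\alpha_-<\alpha_+$, resolves it directly.
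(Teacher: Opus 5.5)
Your proof is correct. Steps 1 and 2 follow the paper, and your Step 2 usefully spells out the identity $M(\alpha_-)-M(\alpha_+)=2(\alpha_--\alpha_+)$ (from $\alpha_-+\alpha_+=4-N$) that the paper hides behind ``by the definition of $c_\mu$''. For positivity you take a different and shorter route.

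The paper normalizes by the fastest-decaying power. It sets $F(t)=e^{-\beta_- t}h(e^t)=c_\mu e^{\gamma t}+(1-c_\mu)e^{\delta t}-1$ with $\gamma=\alpha_--\beta_-$ and $\delta=\alpha_+-\beta_-$, and then runs a convexity argument. This requires computing $F'(0)=\gamma\delta/2>0$ and $F''(0)=\gamma\delta(\gamma+\delta-2)/2>0$, and showing $F''$ stays positive. From Lemma~\ref{L2.1} it uses only $1-c_\mu>0$.

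You instead normalize by the dominant power $r^{\alpha_+}$. Then $g(r)=(1-c_\mu)-|c_\mu|\,r^{\alpha_--\alpha_+}-r^{\beta_--\alpha_+}$ is visibly increasing, because both subtracted terms have negative exponents. This needs the full sign information $c_\mu<0$, but no evaluation of $F'(0)$ or $F''(0)$ and no second-derivative analysis. The paper's route would survive a hypothetical $0\le c_\mu<1$, at the cost of that extra algebra.

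Your normalization also shows directly that $r^{-\alpha_+}H$ increases from $0$ to $1-c_\mu$. This gives exactly the bounds $0\le H\le C|x|^{\alpha_+}$ and $H\sim|x|^{\alpha_+}$ at infinity that the paper uses without comment in Lemmas~\ref{L3.1} and~\ref{L3.2}.
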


\begin{proof}
For $\lambda \in \mathbb{R}$, let $v_\lambda(x) = |x|^\lambda$, $x \in \Omega_e$. A direct computation shows that $\mathcal{L}_\mu v_\lambda = 0$ if and only if $P(\lambda) = 0$. Since $\alpha_\pm$ and $\beta_\pm$ are roots of $P$, it follows that
\[
\mathcal{L}_\mu(|x|^{\alpha_-})
=
\mathcal{L}_\mu(|x|^{\alpha_+})
=
\mathcal{L}_\mu(|x|^{\beta_-})
= 0
\qquad \text{in } \Omega_e.
\]
By linearity of $\mathcal{L}_\mu$ and the definition of $H$, we conclude that $\mathcal{L}_\mu H = 0$ in $\Omega_e$.

By definition of $H$, we have $H = 0$ on $\partial B_1$. Moreover,
\[
\Delta H(x)
=
c_\mu M(\alpha_-) |x|^{\alpha_- - 2}
+ (1 - c_\mu) M(\alpha_+) |x|^{\alpha_+ - 2}
- M(\beta_-) |x|^{\beta_- - 2}.
\]
Therefore, on $\partial B_1$,
\[
\Delta H
=
c_\mu M(\alpha_-)
+ (1 - c_\mu) M(\alpha_+)
- M(\beta_-)
= 0,
\]
by the definition of $c_\mu$.

It remains to show that $H \geq 0$ in $\Omega_e$. Let
\[
h(r) = H(x) = c_\mu r^{\alpha_-} + (1 - c_\mu) r^{\alpha_+} - r^{\beta_-}, \qquad r = |x| \geq 1.
\]
We introduce the function
\[
F(t) = e^{-\beta_- t} h(e^t), \qquad t \geq 0,
\]
that is,
\[
F(t) = c_\mu e^{(\alpha_- - \beta_-)t} + (1 - c_\mu) e^{(\alpha_+ - \beta_-)t} - 1.
\]
We set
\[
\gamma = \alpha_- - \beta_- > 0, \qquad \delta = \alpha_+ - \beta_- > 0.
\]
Then
\[
F(t) = c_\mu e^{\gamma t} + (1 - c_\mu) e^{\delta t} - 1.
\]

A direct computation gives
\[
F(0) = 0,
\]
and
\[
F'(t) = c_\mu \gamma e^{\gamma t} + (1 - c_\mu)\delta e^{\delta t}.
\]
Using the definition of $c_\mu$, one checks that
\[
F'(0)
=
\frac{(\alpha_+ - \beta_-)(\alpha_- - \beta_-)}{2} > 0.
\]
Moreover,
\[
F''(t)
=
e^{\gamma t} \Big( c_\mu \gamma^2 + (1 - c_\mu)\delta^2 e^{(\delta - \gamma)t} \Big).
\]
Since $1 - c_\mu > 0$ (by Lemma~\ref{L2.1}) and $\delta - \gamma = \alpha_+ - \alpha_- > 0$, the function
\[
t \mapsto c_\mu \gamma^2 + (1 - c_\mu)\delta^2 e^{(\delta - \gamma)t}
\]
is increasing on $[0,\infty)$. Therefore,
\[
F''(t) \ge e^{\gamma t} F''(0), \qquad t \ge 0.
\]
Moreover, a direct computation yields
\[
F''(0) = \frac{\gamma \delta (\gamma + \delta - 2)}{2} > 0.
\]
Hence,
\[
F''(t) > 0 \qquad \text{for all } t \ge 0.
\]
It follows that $F$ is strictly convex on $[0,\infty)$. Since $F(0) = 0$ and $F'(0) > 0$, we conclude that
\[
F(t) \ge 0 \quad \text{for all } t \ge 0.
\]

Hence,
\[
h(r) \ge 0 \quad \text{for all } r \ge 1,
\]
that is,
\[
H(x) \ge 0 \quad \text{for all } x \in \Omega_e.
\]
The proof is complete.
\end{proof}

\subsubsection{The case $\mu = \mu^*$}

In this case, we have
\[
A(\mu^*) = 0.
\]
Hence, the two roots $\alpha_-$ and $\alpha_+$ coincide. More precisely, setting
\[
\alpha = -a = -\frac{N-4}{2},
\]
we obtain
\[
\alpha_- = \alpha_+ = \alpha.
\]
Moreover,
\[
\beta_- < \alpha < 0,
\]
where
\[
\beta_- = -a - \sqrt{B(\mu^*)}.
\]

Let
\[
d_{\mu^*} = \frac{M(\beta_-) - M(\alpha)}{2}.
\]
We define
\begin{equation}\label{H-critical}
H(x) = |x|^\alpha + d_{\mu^*} |x|^\alpha \ln |x| - |x|^{\beta_-},
\qquad x \in \overline{\Omega_e}.
\end{equation}

\begin{Lem}\label{L2.3}
Let $\mu = \mu^*$. The function $H$ defined by \eqref{H-critical} satisfies \eqref{HP}.
\end{Lem}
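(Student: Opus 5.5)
We verify the three requirements of \eqref{HP} in turn, working with radial functions and the polynomial $P$ from \eqref{Plambda}.

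\textbf{Step 1: $\mathcal{L}_\mu H=0$.} As in the proof of Lemma~\ref{L2.2}, $\mathcal{L}_\mu |x|^\lambda = P(\lambda)|x|^{\lambda-4}$. Hence $|x|^\alpha$ and $|x|^{\beta_-}$ are solutions, since $P(\alpha)=P(\beta_-)=0$.

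For the logarithmic term we differentiate in $\lambda$. Since $\partial_\lambda |x|^\lambda = |x|^\lambda\ln|x|$ and $\mathcal{L}_\mu$ commutes with $\partial_\lambda$ (everything is smooth in $(\lambda,x)$ on $\Omega_e$), we get
\[
\mathcal{L}_\mu(|x|^\lambda\ln|x|) = P'(\lambda)|x|^{\lambda-4} + P(\lambda)|x|^{\lambda-4}\ln|x|.
\]
It therefore suffices that $\alpha$ is a double root of $P$, i.e.\ $P(\alpha)=P'(\alpha)=0$. To see this, substitute $\lambda=-a+s$. Then
\[
\lambda(\lambda+N-4)=s^2-a^2, \qquad (\lambda-2)(\lambda+N-2)=s^2-(a+2)^2 .
\]
So $P$ is an even polynomial in $s$, namely $(s^2-a^2)(s^2-(a+2)^2)-\mu$. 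Its roots in $s^2$ are $A(\mu)$ and $B(\mu)$, consistent with the definitions. When $\mu=\mu^*$ we have $A=0$, so $s=0$ is a root of even order, hence at least double. Linearity then gives $\mathcal{L}_\mu H=0$.

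\textbf{Step 2: boundary conditions.} On $|x|=1$ we have $\ln|x|=0$, so $H=1+0-1=0$. For the Laplacian, use $\Delta(|x|^\lambda)=M(\lambda)|x|^{\lambda-2}$ and differentiate in $\lambda$:
\[
\Delta(|x|^\lambda\ln|x|)=M'(\lambda)|x|^{\lambda-2}+M(\lambda)|x|^{\lambda-2}\ln|x|.
\]
Evaluating on $\partial B_1$ gives
\[
\Delta H = M(\alpha)+d_{\mu^*}M'(\alpha)-M(\beta_-).
\]
Now $M'(\alpha)=2\alpha+N-2=2$, because $\alpha=-(N-4)/2$. By the definition of $d_{\mu^*}$ this expression vanishes. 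This explains the factor $2$ in $d_{\mu^*}$.

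\textbf{Step 3: positivity (the main step).} Set $t=\ln r\ge 0$ and mimic the convexity argument of Lemma~\ref{L2.2}. Put $\gamma=\alpha-\beta_->0$ and
\[
F(t)=e^{-\beta_- t}h(e^t)=e^{\gamma t}(1+d t)-1, \qquad d=d_{\mu^*}.
\]
Then $F(0)=0$ and $F'(0)=\gamma+d$. Moreover
\[
F''(t)=e^{\gamma t}\bigl(\gamma^2(1+dt)+2\gamma d\bigr).
\]

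\emph{Sign of $d$.} Write $b=\sqrt{B(\mu^*)}$, so that $\beta_-=-a-b$ and $\gamma=b$. Then
\[
M(\beta_-)-M(\alpha)=(\beta_--\alpha)(\beta_-+\alpha+N-2)=(-b)(2-b),
\]
so $d=b(b-2)/2$. From $B(\mu^*)>(a+2)^2\ge 4$ we get $b>2$, hence $d>0$.

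With $d>0$ we have $F'(0)>0$ and $F''(t)>0$ for all $t\ge0$, so $F$ is convex and increasing from $0$. Therefore $F\ge 0$, and $H\ge0$ on $\Omega_e$.

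In fact this case is simpler than Lemma~\ref{L2.2}, since every coefficient is nonnegative; one could even read $F\ge0$ off directly from $e^{\gamma t}\ge1$ and $1+dt\ge1$. The only points needing care are verifying $b>2$ (from $B(\mu^*)>(a+2)^2$, as used in Lemma~\ref{L2.1}) and justifying the $\lambda$-differentiation step for the logarithmic solution.
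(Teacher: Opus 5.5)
Your proof is correct and follows the same three-step route as the paper. You use linearity with the double root $\alpha$ and its logarithmic companion $|x|^\alpha\ln|x|$, evaluate $\Delta H$ on $\partial B_1$ via $\Delta(|x|^\alpha\ln|x|)=M'(\alpha)|x|^{\alpha-2}+M(\alpha)|x|^{\alpha-2}\ln|x|$, and prove positivity of $F(t)=e^{(\alpha-\beta_-)t}(1+d_{\mu^*}t)-1$ from $d_{\mu^*}>0$ using the factorization of $M(\beta_-)-M(\alpha)$. The differences are cosmetic: the paper concludes directly from $F'(t)>0$ rather than via convexity, and it only asserts the double-root property, the identity $M'(\alpha)=2$, and the inequality $\beta_-+\alpha+N-2<0$, all of which you verify explicitly (the last one as $b>2$).
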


\begin{proof}
Since $\alpha$ is a double root of $P$, we have
\[
\mathcal{L}_{\mu^*}(|x|^\alpha) = 0
\qquad \text{in } \Omega_e.
\]
Moreover, a second linearly independent solution associated with this double root is given by
\[
|x|^\alpha \ln |x|.
\]
Since $\beta_-$ is also a root of $P$, we have
\[
\mathcal{L}_{\mu^*}(|x|^{\beta_-}) = 0
\qquad \text{in } \Omega_e.
\]
By linearity, it follows that $\mathcal{L}_{\mu^*} H = 0$ in $\Omega_e$.

By definition of $H$, we have $H = 0$ on $\partial B_1$. Using
\[
\Delta(|x|^\lambda) = M(\lambda) |x|^{\lambda - 2},
\]
we obtain
\[
\Delta\big(|x|^\alpha \ln |x|\big)
=
M'(\alpha) |x|^{\alpha - 2}
+ M(\alpha) |x|^{\alpha - 2} \ln |x|.
\]
Therefore,
\[
\Delta H(x)
=
M(\alpha) |x|^{\alpha - 2}
+ d_{\mu^*} \left( M'(\alpha) |x|^{\alpha - 2}
+ M(\alpha) |x|^{\alpha - 2} \ln |x| \right)
- M(\beta_-) |x|^{\beta_- - 2}.
\]
In particular, for $x \in \partial B_1$, we have
\[
\Delta H(x)
=
M(\alpha) + d_{\mu^*} M'(\alpha) - M(\beta_-)
= 0.
\]

It remains to show that $H \geq 0$ in $\Omega_e$. Let $r = |x| \geq 1$ and write
\[
H(x) = h(r) = r^\alpha + d_{\mu^*} r^\alpha \ln r - r^{\beta_-}.
\]
We set $r = e^t$, $t \geq 0$, and define
\[
F(t) = e^{-\beta_- t} h(e^t).
\]
Then
\[
F(t) = e^{(\alpha - \beta_-)t}(1 + d_{\mu^*} t) - 1.
\]
Moreover,
\[
F(0) = 0,
\]
and
\[
F'(t)
=
e^{(\alpha - \beta_-)t}
\left[
(\alpha - \beta_-)(1 + d_{\mu^*} t) + d_{\mu^*}
\right].
\]
Since
\[
M(\beta_-) - M(\alpha)
=
(\beta_- - \alpha)(\beta_- + \alpha + N - 2),
\]
and
\[
\beta_- - \alpha < 0,
\qquad
\beta_- + \alpha + N - 2 < 0,
\]
we obtain
\[
d_{\mu^*} > 0.
\]
It follows that $F'(t) > 0$ for all $t \geq 0$. Hence $F$ is increasing on $[0,\infty)$. Since $F(0) = 0$, we conclude that
\[
F(t) \geq 0
\qquad \text{for all } t \geq 0.
\]
Consequently,
\[
h(r) \geq 0
\qquad \text{for all } r \geq 1,
\]
which yields $H(x) \geq 0$ for all $x \in \Omega_e$. The proof is complete.
\end{proof}

\subsection{Construction of test functions of the first kind}

For $\ell > 0$, the notation $\ell \gg 1$ means that $\ell$ is sufficiently large.

Let $\varrho \in C^\infty([0,\infty))$ be such that $\varrho \geq 0$, $\varrho \not\equiv 0$, and $\supp \varrho \subset (0,1)$. For $\ell, T \gg 1$, we define
\[
\varrho_T(t) = \varrho^\ell\!\left(\frac{t}{T}\right), \qquad t > 0.
\]

We also consider $\xi \in C^\infty([0,\infty))$ such that $0 \le \xi \le 1$, $\xi(s) = 1$ for $0 \le s \le \tfrac{1}{2}$, and $\xi(s) = 0$ for $s \ge 1$. For $\ell, R \gg 1$, we define
\[
\xi_R(x) = \xi^\ell\!\left(\frac{|x|}{R}\right), \qquad 
H_R(x) = H(x)\, \xi_R(x), \qquad x \in \overline{\Omega_e},
\]
where $H$ is given by \eqref{H-f} if $0 < \mu < \mu^*$, and by \eqref{H-critical} if $\mu = \mu^*$.

For $\ell, T, R \gg 1$, we define
\begin{equation}\label{test1}
\varphi(t,x) = \varrho_T(t)\, H_R(x), \qquad (t,x) \in D.
\end{equation}

\begin{Lem}\label{lem:test1}
For $\ell, T, R \gg 1$, the function $\varphi$ defined by \eqref{test1} belongs to $\Phi$.
\end{Lem}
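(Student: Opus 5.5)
To show that $\varphi$ defined by \eqref{test1} lies in $\Phi$, I will check the membership conditions one at a time: regularity, nonnegativity, compact support in $D$, and the Navier conditions $\varphi=\Delta\varphi=0$ on $\Sigma$.

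\emph{Regularity.} Since $\varrho\in C^\infty$, the time factor $\varrho_T=\varrho^\ell(\cdot/T)$ is $C^\infty$ in $t$ once $\ell\ge 1$. On $\overline{\Omega_e}$ we have $|x|\ge1$, so $H$ is $C^\infty$ there. This holds for both \eqref{H-f} and \eqref{H-critical}, since the powers $|x|^{\lambda}$ and $\ln|x|$ are smooth away from the origin. Because $\xi$ is constant near $s=0$, the map $x\mapsto \xi(|x|/R)$ is smooth. Taking $\ell\ge 4$ (or simply $\ell$ an integer) makes $\xi_R$ of class $C^4$. Hence $\varphi\in C^{1,4}$.

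\emph{Nonnegativity and support.} Nonnegativity follows from $\varrho\ge0$, $\xi\ge0$, and $H\ge0$, the last being given by Lemma~\ref{L2.2} and Lemma~\ref{L2.3}. For the support, note that $\supp\varrho\subset(0,1)$ is compact, so $\supp\varrho_T\subset[\varepsilon T,(1-\varepsilon)T]$ for some $\varepsilon>0$. Also $\xi_R$ vanishes for $|x|\ge R$. Therefore $\supp\varphi\subset[\varepsilon T,(1-\varepsilon)T]\times(\overline{B_R}\setminus B_1)$, which is a compact subset of $D=(0,\infty)\times\overline{\Omega_e}$.

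\emph{Boundary conditions.} This is the only step that needs care, because we must check that the cut-off does not spoil the Navier condition on $\Delta\varphi$. The key is to take $R\ge 4$, say. Then for $1\le |x|\le R/2$ we have $\xi_R\equiv1$, so $H_R=H$ on the annulus $\{1\le|x|<R/2\}$. In particular $\varphi$ and $\varrho_T H$ coincide on a neighbourhood of $\partial B_1$ relative to $\overline{\Omega_e}$. Consequently,
\[
\varphi=\varrho_T(t)H=0,\qquad \Delta\varphi=\varrho_T(t)\Delta H=0\quad\text{on }\Sigma,
\]
by the boundary conditions in \eqref{HP} established in Lemma~\ref{L2.2} and Lemma~\ref{L2.3}. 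This concludes the argument.
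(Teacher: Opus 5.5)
Your proof is correct and follows the same route as the paper. The regularity comes from $\varrho_T$, $\xi_R$ and the smoothness of $H$ on $\overline{\Omega_e}$, nonnegativity from Lemmas~\ref{L2.2} and~\ref{L2.3}, and the Navier conditions are inherited from $H$ because $\xi_R\equiv 1$ near $\partial B_1$ for large $R$; you just write out the compact support and the requirement $\ell\ge 4$ more explicitly than the paper does.

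One small imprecision: for non-integer $\ell$ the power $\varrho^\ell$ need not be $C^\infty$. However, $\ell\ge 1$ does give the $C^1$ regularity in $t$, which is all that membership in $\Phi$ requires.
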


\begin{proof}
By construction, $\varrho_T \in C_c^1(0,\infty)$ and $\xi_R \in C_c^4(\overline{\Omega_e})$. Since $H$ is smooth in $\overline{\Omega_e}$, we have
\[
\varphi \in C_c^{1,4}(D).
\]
Moreover, by Lemmas \ref{L2.2} and \ref{L2.3}, we have $H \geq 0$ in $\Omega_e$. Since $\varrho_T \geq 0$ and $\xi_R \geq 0$, it follows that
\[
\varphi \geq 0 \qquad \text{in } D.
\]
Furthermore,  we have $H = \Delta H = 0$ on $\partial B_1$. Since $\xi_R = 1$ in a neighborhood of $\partial B_1$ for $R \gg 1$, we obtain
\[
\varphi = \Delta \varphi = 0
\qquad \text{on } \Sigma.
\]
Therefore, $\varphi \in \Phi$.
\end{proof}

\subsection{Construction of test functions of the second kind}

We choose $\zeta \in C^\infty(\mathbb{R})$ such that $0 \le \zeta \le 1$, $\zeta(s) = 1$ for $s \le 0$, and $\zeta(s) = 0$ for $s \ge 1$. For $\ell, R \gg 1$, we define
\[
\zeta_R(x) = \zeta^\ell\!\left(\frac{2\ln |x|}{\ln R} - 1\right), \qquad 
\overline{H}_R(x) = H(x)\, \zeta_R(x), \qquad x \in \overline{\Omega_e},
\]
where $H$ is given by \eqref{H-f} if $0 < \mu < \mu^*$, and by \eqref{H-critical} if $\mu = \mu^*$.

For $\ell, T, R \gg 1$, we define
\begin{equation}\label{test2}
\varphi(t,x) = \varrho_T(t)\, \overline{H}_R(x), \qquad (t,x) \in D.
\end{equation}

\begin{Lem}\label{lem:test2}
For $\ell, T, R \gg 1$, the function $\varphi$ defined by \eqref{test2} belongs to $\Phi$.
\end{Lem}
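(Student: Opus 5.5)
The argument follows the proof of Lemma~\ref{lem:test1}, with the radial cut-off $\xi_R$ replaced by the logarithmic cut-off $\zeta_R$. We must check four things: regularity and compact support, nonnegativity, the Navier conditions on $\Sigma$, and membership in $\Phi$.

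\emph{Time factor.} Since $\varrho\in C^\infty$ with $\supp\varrho\subset(0,1)$ and $\ell\ge 1$, we have $\varrho_T=\varrho^\ell(\cdot/T)\in C^1_c((0,\infty))$, and its support lies in $(0,T)$.

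\emph{Space factor: support.} Write $s(x)=\frac{2\ln|x|}{\ln R}-1$. Then
\[
s\le 0 \iff |x|\le R^{1/2}, \qquad s\ge 1 \iff |x|\ge R.
\]
Hence $\zeta_R=1$ on $\{1\le |x|\le \sqrt R\}$ and $\zeta_R=0$ for $|x|\ge R$. So $\zeta_R$ has compact support in $\overline{\Omega_e}\cap\overline{B_R}$.

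\emph{Space factor: regularity.} On $\overline{\Omega_e}$ we have $|x|\ge1$, so $x\mapsto\ln|x|$ is smooth there. Since $\zeta\in C^\infty(\mathbb{R})$, the composition $\zeta^\ell(s(x))$ is $C^\infty$, hence $C^4$, on $\overline{\Omega_e}$. The function $H$, given by \eqref{H-f} or \eqref{H-critical}, is a finite combination of $|x|^\lambda$ and $|x|^\alpha\ln|x|$, so it is smooth on $\overline{\Omega_e}$. Therefore $\overline H_R\in C^4_c(\overline{\Omega_e})$, and consequently $\varphi=\varrho_T\overline H_R\in C^{1,4}_c(D)$.

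\emph{Nonnegativity.} Lemmas~\ref{L2.2} and \ref{L2.3} give $H\ge0$ in $\Omega_e$. By continuity, $H\ge0$ on $\overline{\Omega_e}$ as well. Since $\zeta\in[0,1]$ and $\varrho\ge0$, we have $\zeta_R\ge0$ and $\varrho_T\ge0$, so $\varphi\ge0$.

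\emph{Boundary conditions.} Take $R>1$, so that $\sqrt R>1$. Then $\zeta_R\equiv1$ on the annulus $\{1\le|x|<\sqrt R\}$, which is a neighbourhood of $\partial B_1$ relative to $\overline{\Omega_e}$. On this annulus $\overline H_R=H$, so
\[
\Delta\overline H_R=\Delta H \quad\text{there}.
\]
Lemmas~\ref{L2.2} and \ref{L2.3} give $H=\Delta H=0$ on $\partial B_1$. Hence $\varphi=\varrho_T H=0$ and $\Delta\varphi=\varrho_T\Delta H=0$ on $\Sigma$.

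Putting the four checks together, $\varphi\in\Phi$. There is no real obstacle here. The only point needing care is that $\zeta_R$ equals $1$ on a full neighbourhood of $\partial B_1$ and not merely on the sphere itself: this is what lets $\Delta$ pass through the cut-off. It holds as soon as $R>1$, and in particular for $R\gg1$.
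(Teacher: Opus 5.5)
Your proof is correct and is the same adaptation of Lemma~\ref{lem:test1} that the paper intends when it omits this proof; the only new ingredient is that $\zeta_R\equiv 1$ on $\{1\le |x|\le \sqrt{R}\}$, a relative neighbourhood of $\partial B_1$ in $\overline{\Omega_e}$, and you verify this correctly. (A minor remark: $\zeta^\ell$ is $C^\infty$ only when $\ell$ is an integer, but for real $\ell\ge 4$ it is still $C^4$ because $\zeta\ge 0$, and that is all membership in $\Phi$ requires.)
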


\begin{proof}
The proof is similar to that of Lemma~\ref{lem:test1} and is therefore omitted.
\end{proof}

\section{Auxiliary estimates}\label{sec3}

In this section, we establish auxiliary estimates used later in the proof of the nonexistence results. We recall that $p > 1$, $\sigma > -4$, $N \ge 5$, and that $\mu$ satisfies \eqref{mu-as}.

In what follows, $C$ denotes a positive constant independent of $T$ and $R$, whose value may change from line to line. We also set
\[
p' = \frac{p}{p-1}.
\]

\subsection{The case $0 < \mu < \mu^*$}

Throughout this subsection, we assume that
\[
0 < \mu < \mu^*,
\]
and that $H$ is given by \eqref{H-f}.

\begin{Lem}\label{L3.1}
For $R \gg 1$, one has
\[
\int_{\Omega_e} H(x)\, \xi_R(x)\, |x|^{-\frac{\sigma}{p-1}} \, dx
\le
C \left( \ln R + R^{N + \alpha_+ - \frac{\sigma}{p-1}} \right).
\]
\end{Lem}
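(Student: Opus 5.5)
Since $0\le \xi_R\le 1$ and $\xi_R$ vanishes for $|x|\ge R$, the integral is bounded by
\[
\int_{1<|x|<R} H(x)\,|x|^{-\frac{\sigma}{p-1}}\,dx .
\]
The plan is to bound $H$ pointwise and then integrate in polar coordinates. By \eqref{H-f} and Lemma~\ref{L2.1}, $c_\mu<0$ and $-|x|^{\beta_-}\le 0$. Hence
\[
0\le H(x)\le (1-c_\mu)|x|^{\alpha_+}, \qquad |x|\ge 1,
\]
where $H\ge 0$ comes from Lemma~\ref{L2.2}. This gives
\[
\int_{\Omega_e} H\,\xi_R\,|x|^{-\frac{\sigma}{p-1}}\,dx \le C\int_1^R r^{\alpha_+-\frac{\sigma}{p-1}+N-1}\,dr .
\]

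Set $\kappa = N+\alpha_+-\frac{\sigma}{p-1}$. The radial integral splits into three cases. If $\kappa>0$, it is at most $\frac{1}{\kappa}R^{\kappa}$. If $\kappa=0$, it equals $\ln R$. If $\kappa<0$, it is bounded by $\frac{1}{|\kappa|}$, which is at most $C\ln R$ once $R\ge e$.

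In every case the integral is bounded by $C(\ln R + R^{\kappa})$ for $R\gg1$, since both terms are nonnegative. This is exactly the claimed estimate.

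There is no real obstacle. The only points to check are the sign facts that make the pointwise bound work: $c_\mu<0$ (Lemma~\ref{L2.1}), $1-c_\mu>0$, and $|x|^{\alpha_-}\le |x|^{\alpha_+}$ for $|x|\ge1$. The last of these is not actually needed, because the $c_\mu$ term is nonpositive and can simply be dropped. The $\ln R$ term is included so that all three cases of the sign of $\kappa$ are covered uniformly.
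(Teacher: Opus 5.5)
Your proof is correct and follows essentially the paper's argument: restrict to $1<|x|<R$, bound $H\le C|x|^{\alpha_+}$ pointwise, and integrate in polar coordinates. The only differences are cosmetic. You get the pointwise bound from the sign $c_\mu<0$, whereas the paper invokes the ordering $\beta_-<\alpha_-<\alpha_+$, which gives $|H|\le C|x|^{\alpha_+}$ on $|x|\ge 1$ without using any sign. You also spell out the three cases for the sign of $\kappa$, which the paper leaves implicit.
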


\begin{proof}
From the properties of the cutoff function $\xi$, we have
\[
0 \le \xi_R \le 1, \qquad \supp \xi_R \subset \{ x \in \mathbb{R}^N : 1 \le |x| \le R \}.
\]
Therefore,
\[
\int_{\Omega_e} H(x)\, \xi_R(x)\, |x|^{-\frac{\sigma}{p-1}} \, dx
\le
\int_{1 < |x| < R} H(x)\, |x|^{-\frac{\sigma}{p-1}} \, dx.
\]
On the other hand, since $\beta_- < \alpha_- < \alpha_+$, we have
\[
0 \le H(x) \le C |x|^{\alpha_+}, \qquad 1 < |x| < R.
\]
Hence,
\[
\int_{\Omega_e} H(x)\, \xi_R(x)\, |x|^{-\frac{\sigma}{p-1}} \, dx
\le
C \int_{1 < |x| < R} |x|^{\alpha_+ - \frac{\sigma}{p-1}} \, dx.
\]
Passing to polar coordinates yields
\[
\begin{aligned}
\int_{\Omega_e} H(x)\, \xi_R(x)\, |x|^{-\frac{\sigma}{p-1}} \, dx
&\le
C \int_1^R r^{N-1 + \alpha_+ - \frac{\sigma}{p-1}} \, dr \\
&\le
C \left( \ln R + R^{N + \alpha_+ - \frac{\sigma}{p-1}} \right).
\end{aligned}
\]
The proof is complete.
\end{proof}

\begin{Lem}\label{L3.2}
For $\ell, R \gg 1$, one has
\[
\int_{\Omega_e}
\frac{\left| \mathcal{L}_\mu (H \xi_R)(x) \right|^{p'}}
{\big( H(x)\, \xi_R(x) \big)^{p'-1}}
\, |x|^{-\frac{\sigma}{p-1}} \, dx
\le
C\, R^{N + \alpha_+ - 4p' - \frac{\sigma}{p-1}}.
\]
\end{Lem}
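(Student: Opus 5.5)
Since $\mathcal{L}_\mu H = 0$ in $\Omega_e$ (Lemma~\ref{L2.2}), the integrand vanishes wherever $\xi_R \equiv 1$. So the first step is to expand $\mathcal{L}_\mu(H\xi_R)$ by the Leibniz rule:
\[
\mathcal{L}_\mu(H\xi_R) = \xi_R\,\mathcal{L}_\mu H + \big(\Delta^2(H\xi_R) - \xi_R \Delta^2 H\big) = \Delta^2(H\xi_R) - \xi_R\Delta^2 H .
\]
The right-hand side is a finite sum of terms $\partial^\beta H\,\partial^\gamma \xi_R$ with $|\beta|+|\gamma| = 4$ and $|\gamma|\ge 1$. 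Since $\xi_R$ is radial and $\xi_R = 1$ on $\{|x|\le R/2\}$, these terms are supported in the annulus $\mathcal{A}_R = \{R/2 \le |x| \le R\}$. In particular the Hardy--Rellich term cancels exactly and never needs to be estimated separately.

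Next I estimate the factors on $\mathcal{A}_R$.

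\textbf{Derivatives of $H$.} $H$ is a combination of radial powers $|x|^{\alpha_\pm}$ and $|x|^{\beta_-}$ with $\beta_- < \alpha_- < \alpha_+$. Hence $|\partial^\beta H(x)| \le C |x|^{\alpha_+ - |\beta|} \le C R^{\alpha_+ - |\beta|}$ on $\mathcal{A}_R$.

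\textbf{Derivatives of $\xi_R$.} Writing $\xi_R = \xi^\ell(|x|/R)$, the chain rule gives $|\partial^\gamma \xi_R| \le C R^{-|\gamma|}\, \xi^{\ell - |\gamma|}(|x|/R) \le C R^{-|\gamma|}\, \xi^{\ell-4}(|x|/R)$.

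Together these yield
\[
|\mathcal{L}_\mu(H\xi_R)| \le C R^{\alpha_+ - 4}\, \xi^{\ell-4}(|x|/R)\,\mathbf{1}_{\mathcal{A}_R}.
\]

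\textbf{Lower bound for $H$.} This is the main point requiring care. I need $H(x) \ge c\,|x|^{\alpha_+}$ for $|x|$ large. Since $1 - c_\mu > 1$ by Lemma~\ref{L2.1}, and $|x|^{\alpha_+}$ dominates the other two powers at infinity, we have $H(x) = (1-c_\mu)|x|^{\alpha_+}(1+o(1))$. Therefore $H \ge c R^{\alpha_+}$ on $\mathcal{A}_R$ for $R \gg 1$. This is where the explicit structure of $H$ from \eqref{H-f}, and not merely its positivity, is used.

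\textbf{Assembling the bound.} On $\mathcal{A}_R$ the integrand is bounded by
\[
C\,\frac{R^{(\alpha_+ - 4)p'}\, \xi^{(\ell-4)p'}}{R^{\alpha_+(p'-1)}\, \xi^{\ell(p'-1)}}\, R^{-\frac{\sigma}{p-1}}
= C R^{\alpha_+ - 4p' - \frac{\sigma}{p-1}}\, \xi^{\ell - 4p'}(|x|/R).
\]
The step $|x|^{-\sigma/(p-1)} \sim R^{-\sigma/(p-1)}$ uses $|x| \sim R$ on $\mathcal{A}_R$, valid for either sign of $\sigma$. The exponent of $\xi$ is nonnegative once $\ell \ge 4p'$, which is what "$\ell \gg 1$" is used for; then $\xi^{\ell-4p'} \le 1$.

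Finally, integrating over $\mathcal{A}_R$, whose measure is at most $C R^N$, gives the stated bound $C R^{N + \alpha_+ - 4p' - \frac{\sigma}{p-1}}$.
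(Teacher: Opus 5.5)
Your proposal is correct and follows essentially the same route as the paper: you expand $\mathcal{L}_\mu(H\xi_R)$ by the Leibniz rule so that $\xi_R\,\mathcal{L}_\mu H$ cancels, bound the remaining terms pointwise by $C|x|^{\alpha_+-4}\xi^{\ell-4}(|x|/R)$ on the annulus $R/2<|x|<R$, use $H\sim|x|^{\alpha_+}$ there, and integrate with $\ell\ge 4p'$. You also justify the lower bound $H\ge c|x|^{\alpha_+}$ explicitly via $1-c_\mu>0$ (Lemma~\ref{L2.1}), a step the paper only asserts.
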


\begin{proof}
For $x \in \Omega_e$, we have
\[
\mathcal{L}_\mu (H \xi_R)
=
\Delta^2 (H \xi_R) - \frac{\mu}{|x|^4} H \xi_R.
\]
Using the product rule for $\Delta^2 (H \xi_R)$, we obtain
\begin{align*}
\Delta^2 (H \xi_R)
&=
\xi_R\, \Delta^2 H
+ H\, \Delta^2 \xi_R
+ 2 (\Delta H)(\Delta \xi_R) \\
&\quad
+ 4 \nabla H \cdot \nabla(\Delta \xi_R)
+ 4 \nabla \xi_R \cdot \nabla(\Delta H)
+ 4 \sum_{i,j=1}^N H_{ij}\, (\xi_R)_{ij}.
\end{align*}
Therefore,
\begin{align*}
\mathcal{L}_\mu (H \xi_R)
&=
\xi_R\, \mathcal{L}_\mu H
+ H\, \Delta^2 \xi_R
+ 2 (\Delta H)(\Delta \xi_R) \\
&\quad
+ 4 \nabla H \cdot \nabla(\Delta \xi_R)
+ 4 \nabla \xi_R \cdot \nabla(\Delta H)
+ 4 \sum_{i,j=1}^N H_{ij}\, (\xi_R)_{ij}.
\end{align*}
Since $\mathcal{L}_\mu H = 0$ (by Lemma~\ref{L2.2}), the first term vanishes. Hence,
\begin{equation}\label{S1-L3.2}
\begin{aligned}
\mathcal{L}_\mu (H \xi_R)
&=
H\, \Delta^2 \xi_R
+ 2 (\Delta H)(\Delta \xi_R) \\
&\quad
+ 4 \nabla H \cdot \nabla(\Delta \xi_R)
+ 4 \nabla \xi_R \cdot \nabla(\Delta H)
+ 4 \sum_{i,j=1}^N H_{ij}\, (\xi_R)_{ij}.
\end{aligned}
\end{equation}
Here $H_{ij} = \partial_{x_i x_j}^2 H$ and $(\xi_R)_{ij} = \partial_{x_i x_j}^2 \xi_R$.

Since $\mathcal{L}_\mu (H \xi_R)$ involves only derivatives of $\xi_R$, and these derivatives are supported in the annulus
\[
\frac{R}{2} < |x| < R,
\]
we obtain
\begin{equation}\label{S2-L3.2}
\int_{\Omega_e}
\frac{\left| \mathcal{L}_\mu (H \xi_R)(x) \right|^{p'}}
{\big( H(x)\, \xi_R(x) \big)^{p'-1}}
\, |x|^{-\frac{\sigma}{p-1}} \, dx
=
\int_{\frac{R}{2} < |x| < R}
\frac{\left| \mathcal{L}_\mu (H \xi_R)(x) \right|^{p'}}
{\big( H(x)\, \xi_R(x) \big)^{p'-1}}
\, |x|^{-\frac{\sigma}{p-1}} \, dx.
\end{equation}

By the definitions of $H$ and $\xi_R$, one easily verifies that, for $\frac{R}{2} < |x| < R$,
\[
\begin{aligned}
H(x)\, |\Delta^2 \xi_R(x)|
&\le
C |x|^{\alpha_+ - 4}\, \xi^{\ell-4}\!\left(\frac{|x|}{R}\right), \quad
|\Delta H(x)|\, |\Delta \xi_R(x)|
\le
C |x|^{\alpha_+ - 4}\, \xi^{\ell-2}\!\left(\frac{|x|}{R}\right), \\
|\nabla H(x)|\, |\nabla(\Delta \xi_R)(x)|
&\le
C |x|^{\alpha_+ - 4}\, \xi^{\ell-3}\!\left(\frac{|x|}{R}\right), \quad
|\nabla \xi_R(x)|\, |\nabla(\Delta H)(x)|
\le
C |x|^{\alpha_+ - 4}\, \xi^{\ell-1}\!\left(\frac{|x|}{R}\right),
\end{aligned}
\]
and
\[
\sum_{i,j=1}^N |H_{ij}(x)|\, |(\xi_R)_{ij}(x)|
\le
C |x|^{\alpha_+ - 4}\, \xi^{\ell-2}\!\left(\frac{|x|}{R}\right).
\]

From \eqref{S1-L3.2} and the fact that $0 \le \xi \le 1$, we deduce that, for $\frac{R}{2} < |x| < R$,
\[
\left| \mathcal{L}_\mu (H \xi_R)(x) \right|
\le
C |x|^{\alpha_+ - 4} \, \xi^{\ell-4}\!\left(\frac{|x|}{R}\right),
\]
which yields (for $\ell \ge 4p'$)
\[
\begin{aligned}
\frac{\left| \mathcal{L}_\mu (H \xi_R)(x) \right|^{p'}}
{\big( H(x)\, \xi_R(x) \big)^{p'-1}}
\, |x|^{-\frac{\sigma}{p-1}}
&\le
C
\frac{|x|^{(\alpha_+ - 4)p' - \frac{\sigma}{p-1}}}
{H(x)^{p'-1}}
\, \xi^{\ell - 4p'}\!\left(\frac{|x|}{R}\right) \\
&\le
C \frac{|x|^{(\alpha_+ - 4)p' - \frac{\sigma}{p-1}}}
{H(x)^{p'-1}}.
\end{aligned}
\]
On the other hand, for $R \gg 1$, we have $H(x) \sim |x|^{\alpha_+}$ for $\frac{R}{2} < |x| < R$. Therefore,
\[
\frac{\left| \mathcal{L}_\mu (H \xi_R)(x) \right|^{p'}}
{\big( H(x)\, \xi_R(x) \big)^{p'-1}}
\, |x|^{-\frac{\sigma}{p-1}}
\le
C |x|^{\alpha_+ - 4p' - \frac{\sigma}{p-1}},
\qquad \frac{R}{2} < |x| < R.
\]
Using \eqref{S2-L3.2}, we obtain
\[
\begin{aligned}
\int_{\Omega_e}
\frac{\left| \mathcal{L}_\mu (H \xi_R)(x) \right|^{p'}}
{\big( H(x)\, \xi_R(x) \big)^{p'-1}}
\, |x|^{-\frac{\sigma}{p-1}} \, dx
&\le
C \int_{\frac{R}{2} < |x| < R} |x|^{\alpha_+ - 4p' - \frac{\sigma}{p-1}} \, dx \\
&\le
C R^{N + \alpha_+ - 4p' - \frac{\sigma}{p-1}}.
\end{aligned}
\]
This completes the proof.
\end{proof}

\begin{Lem}\label{L3.3}
For $R \gg 1$, one has
\[
\int_{\Omega_e} H(x)\, \zeta_R(x)\, |x|^{-\frac{\sigma}{p-1}} \, dx
\le
C \left( \ln R + R^{N + \alpha_+ - \frac{\sigma}{p-1}} \right).
\]
\end{Lem}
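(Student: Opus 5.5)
The argument follows that of Lemma~\ref{L3.1}; only the support of the cutoff changes. First I locate the support of $\zeta_R$. By definition, $\zeta_R(x) = \zeta^\ell\!\left(\frac{2\ln|x|}{\ln R} - 1\right)$. Since $\zeta(s)=0$ for $s\ge 1$, $\zeta_R(x)=0$ as soon as $\frac{2\ln|x|}{\ln R} - 1 \ge 1$, that is, as soon as $|x| \ge R$. Since also $0\le \zeta\le 1$, we get
\[
0 \le \zeta_R \le 1, \qquad \supp \zeta_R \subset \{x : 1 \le |x| \le R\}.
\]
In addition, $\zeta_R = 1$ on $1\le |x|\le \sqrt{R}$, although this is not needed for the upper bound.

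Next I bound $H$ pointwise. Since $\beta_- < \alpha_- < \alpha_+$ and $|x|\ge 1$, the expression \eqref{H-f} gives
\[
0 \le H(x) \le |c_\mu|\,|x|^{\alpha_-} + (1-c_\mu)|x|^{\alpha_+} + |x|^{\beta_-} \le C|x|^{\alpha_+}.
\]
Nonnegativity comes from Lemma~\ref{L2.2}. Combining this with the support property, then passing to polar coordinates,
\[
\int_{\Omega_e} H\,\zeta_R\,|x|^{-\frac{\sigma}{p-1}}\,dx \le C\int_1^R r^{N-1+\alpha_+-\frac{\sigma}{p-1}}\,dr .
\]

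Finally I evaluate the radial integral according to the sign of $\kappa := N+\alpha_+-\frac{\sigma}{p-1}$.
\begin{itemize}
\item If $\kappa>0$, the integral is at most $R^\kappa/\kappa$.
\item If $\kappa=0$, it equals $\ln R$.
\item If $\kappa<0$, it is at most $1/|\kappa| \le C\ln R$, since $R\gg1$.
\end{itemize}
In every case the integral is at most $C(\ln R + R^{\kappa})$, which is the claimed estimate. There is no genuine obstacle here. The only point needing care is the support computation for the logarithmic cutoff $\zeta_R$, which shows that its support is no larger than that of $\xi_R$.
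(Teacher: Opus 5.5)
Your proof is correct and follows the paper's approach. The paper omits the proof of Lemma~\ref{L3.3} by referring to Lemma~\ref{L3.1}, and you carry out that argument: you check that $\zeta_R$ vanishes for $|x|\ge R$, bound $0\le H\le C|x|^{\alpha_+}$, and integrate in polar coordinates, with your case split on the sign of $N+\alpha_+-\frac{\sigma}{p-1}$ making explicit the final bound that the paper states without comment.
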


\begin{proof}
The proof follows the same arguments as in Lemma~\ref{L3.1} and is therefore omitted.
\end{proof}

\begin{Lem}\label{L3.4}
For $\ell, R \gg 1$, and
\[
p = 1 + \frac{\sigma + 4}{N - 4 + \alpha_+},
\]
one has
\[
\int_{\Omega_e}
\frac{\left| \mathcal{L}_\mu (H \zeta_R)(x) \right|^{p'}}
{\big( H(x)\, \zeta_R(x) \big)^{p'-1}}
\, |x|^{-\frac{\sigma}{p-1}} \, dx
\le
C (\ln R)^{1 - p'}.
\]
\end{Lem}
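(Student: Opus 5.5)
The plan is to mimic the proof of Lemma~\ref{L3.2}, replacing the cutoff $\xi_R$ by the logarithmic cutoff $\zeta_R$. The logarithmic scaling contributes an extra factor $(\ln R)^{-1}$ to every derivative of $\zeta_R$, and at the critical value of $p$ the radial integral becomes exactly logarithmic.

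First, since $\mathcal{L}_\mu H=0$ by Lemma~\ref{L2.2}, the product-rule identity \eqref{S1-L3.2} holds verbatim with $\xi_R$ replaced by $\zeta_R$. So $\mathcal{L}_\mu(H\zeta_R)$ involves only derivatives of $\zeta_R$ of order $1$ to $4$. Writing $s(x)=\frac{2\ln|x|}{\ln R}-1$, these derivatives are supported where $0<s<1$, that is, in the annulus $\sqrt{R}<|x|<R$.

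Second, I will establish the pointwise bounds
\[
|\nabla^k \zeta_R(x)|\le C(\ln R)^{-1}|x|^{-k}\,\zeta^{\ell-k}(s(x)),\qquad k=1,\dots,4,\quad \sqrt R<|x|<R,
\]
by the chain rule. The derivatives of $\zeta^\ell$ produce powers $\zeta^{\ell-j}$ times products of derivatives of $\zeta$. The inner function satisfies $|\nabla^m s|\le C(\ln R)^{-1}|x|^{-m}$, and every term contains at least one such factor. Any additional factors of $(\ln R)^{-1}\le 1$ only improve the estimate, and $\zeta^{\ell-j}\le\zeta^{\ell-k}$ for $j\le k$. I expect this bookkeeping to be the only genuinely new point, and it is routine.

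Third, combine these bounds with $|\nabla^k H(x)|\le C|x|^{\alpha_+-k}$ for $|x|\ge1$, which holds because $\alpha_+$ is the largest exponent in \eqref{H-f}. This gives
\[
|\mathcal{L}_\mu(H\zeta_R)|\le C(\ln R)^{-1}|x|^{\alpha_+-4}\zeta^{\ell-4}(s).
\]
Next use $H(x)\ge c|x|^{\alpha_+}$ for $|x|\ge\sqrt R$ with $R\gg1$, valid since the coefficient $1-c_\mu$ is positive by Lemma~\ref{L2.1}. Then for $\ell\ge 4p'$ the integrand is bounded by
\[
C(\ln R)^{-p'}|x|^{\alpha_+-4p'-\frac{\sigma}{p-1}}.
\]

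Finally, the critical choice of $p$ gives $\frac{\sigma+4}{p-1}=N-4+\alpha_+$, and therefore
\[
\frac{\sigma}{p-1}+4p'=\frac{\sigma+4}{p-1}+4=N+\alpha_+.
\]
Hence the integrand is bounded by $C(\ln R)^{-p'}|x|^{-N}$. Passing to polar coordinates,
\[
\int_{\sqrt R<|x|<R}|x|^{-N}\,dx=C\ln R^{1/2}\le C\ln R .
\]
This yields the claimed bound $C(\ln R)^{1-p'}$.
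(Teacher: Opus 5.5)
Your proposal is correct and follows essentially the same route as the paper. You use the product-rule identity with $\mathcal{L}_\mu H=0$, derivatives of $\zeta_R$ supported in $\sqrt{R}<|x|<R$ carrying a factor $(\ln R)^{-1}$, and the lower bound $H\ge c|x|^{\alpha_+}$; the identity $N+\alpha_+-4p'-\frac{\sigma}{p-1}=0$ then reduces the radial integral to $\int_{\sqrt R}^R r^{-1}\,dr$. Your observation that each term of $\nabla^k\zeta_R$ contains \emph{at least one} factor $(\ln R)^{-1}$, rather than one per derivative as the paper loosely says, is the accurate statement, and it is all the estimate needs.
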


\begin{proof}
The proof follows the same arguments as in Lemma~\ref{L3.2}. We only indicate the main difference. For the logarithmic cutoff $\zeta_R$, each spatial derivative produces an additional factor $(\ln R)^{-1}$, and the cutoff error is supported in
\[
R^{1/2} < |x| < R.
\]
Moreover, the choice
\[
p = 1 + \frac{\sigma + 4}{N - 4 + \alpha_+}
\]
is equivalent to
\[
N - 1 + \alpha_+ - 4p' - \frac{\sigma}{p-1} = -1.
\]
Thus, arguing as in Lemma~\ref{L3.2}, we obtain
\[
\begin{aligned}
\int_{\Omega_e}
\frac{\left| \mathcal{L}_\mu (H \zeta_R)(x) \right|^{p'}}
{\big( H(x)\, \zeta_R(x) \big)^{p'-1}}
\, |x|^{-\frac{\sigma}{p-1}} \, dx
&\le
C (\ln R)^{-p'} \int_{R^{1/2}}^R r^{-1} \, dr \\
&\le
C (\ln R)^{1 - p'},
\end{aligned}
\]
which proves the estimate.
\end{proof}

\subsection{The case $\mu = \mu^*$}

Throughout this subsection, we assume that
\[
\mu = \mu^*,
\]
and that $H$ is given by \eqref{H-critical}.

\begin{Lem}\label{L3.5}
For $R \gg 1$, one has
\[
\int_{\Omega_e} H(x)\, \xi_R(x)\, |x|^{-\frac{\sigma}{p-1}} \, dx
\le
C\, \ln R \left( \ln R + R^{N + \alpha - \frac{\sigma}{p-1}} \right).
\]
\end{Lem}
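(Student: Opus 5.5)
The argument mirrors the proof of Lemma~\ref{L3.1}, with the bound on $H$ now carrying a logarithmic factor.

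First, by the properties of $\xi$, we have $0\le \xi_R\le 1$ and $\supp \xi_R\subset\{1\le |x|\le R\}$. Hence
\[
\int_{\Omega_e} H\,\xi_R\,|x|^{-\frac{\sigma}{p-1}}\,dx\le \int_{1<|x|<R} H(x)\,|x|^{-\frac{\sigma}{p-1}}\,dx .
\]

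Next, we need a pointwise upper bound for $H$ given by \eqref{H-critical}. For $1<|x|<R$ we have $\ln|x|\le \ln R$, and $d_{\mu^*}>0$ by the proof of Lemma~\ref{L2.3}. Since $-|x|^{\beta_-}\le 0$ and $H\ge 0$ by Lemma~\ref{L2.3}, this gives
\[
0\le H(x)\le |x|^{\alpha}\bigl(1+d_{\mu^*}\ln R\bigr)\le C\,\ln R\,|x|^{\alpha},\qquad 1<|x|<R,\ R\gg1 .
\]
The point is to bound $\ln|x|$ crudely by $\ln R$ and pull it out of the integral. This produces the extra factor $\ln R$ in front, rather than forcing us to integrate $r^{\gamma}\ln r$ directly.

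Finally, we pass to polar coordinates:
\[
\int_{1<|x|<R} H\,|x|^{-\frac{\sigma}{p-1}}\,dx\le C\ln R\int_1^R r^{N-1+\alpha-\frac{\sigma}{p-1}}\,dr .
\]
Set $\gamma=N+\alpha-\frac{\sigma}{p-1}$. The integral $\int_1^R r^{\gamma-1}\,dr$ is bounded by
\begin{itemize}
\item $C R^{\gamma}$ if $\gamma>0$,
\item $\ln R$ if $\gamma=0$,
\item a constant (hence $\le C\ln R$ for $R\gg1$) if $\gamma<0$.
\end{itemize}
In all three cases it is at most $C(\ln R+R^{\gamma})$, which yields the stated estimate. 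No step is a real obstacle. The only care needed is in the upper bound for $H$: use the sign of $d_{\mu^*}$ to drop the $|x|^{\beta_-}$ term, and bound $1+d_{\mu^*}\ln|x|$ by $C\ln R$ uniformly on the support.
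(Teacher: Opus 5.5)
Your proof is correct and follows essentially the same route as the paper. You bound $H$ on the support of $\xi_R$ by $C\,\ln R\,|x|^{\alpha}$, which the paper phrases as $H(x)\sim|x|^{\alpha}\ln|x|$ producing an extra logarithmic factor, and then integrate in polar coordinates exactly as in Lemma~\ref{L3.1}; your use of $d_{\mu^*}>0$ and $H\ge 0$ from Lemma~\ref{L2.3}, and the case split on the sign of $N+\alpha-\frac{\sigma}{p-1}$, just make explicit the details the paper leaves implicit.
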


\begin{proof}
The proof follows the same arguments as in Lemma~\ref{L3.1}. We only indicate the difference. In the present case, we have
\[
H(x) \sim |x|^\alpha \ln |x|
\qquad \text{for } |x| \gg 1,
\]
which introduces an additional logarithmic factor. The conclusion then follows by the same argument.
\end{proof}

\begin{Lem}\label{L3.6}
For $\ell, R \gg 1$, one has
\[
\int_{\Omega_e}
\frac{\left| \mathcal{L}_\mu (H \xi_R)(x) \right|^{p'}}
{\big( H(x)\, \xi_R(x) \big)^{p'-1}}
\, |x|^{-\frac{\sigma}{p-1}} \, dx
\le
C\, R^{N + \alpha - 4p' - \frac{\sigma}{p-1}} \ln R.
\]
\end{Lem}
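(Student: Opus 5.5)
We follow the proof of Lemma~\ref{L3.2}, with $H$ now given by \eqref{H-critical}; the only new feature is the logarithmic factor in $H$. First, since $\mathcal{L}_{\mu^*}H=0$ in $\Omega_e$ by Lemma~\ref{L2.3}, the product-rule identity \eqref{S1-L3.2} holds verbatim. Thus $\mathcal{L}_{\mu^*}(H\xi_R)$ is a sum of terms, each containing at least one derivative of $\xi_R$. All these terms are supported in the annulus $\frac R2<|x|<R$, so the integral reduces to that annulus exactly as in \eqref{S2-L3.2}.

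Next we estimate the derivatives of $H$ pointwise for $|x|\gg1$. Differentiating $|x|^\alpha$, $|x|^\alpha\ln|x|$ and $|x|^{\beta_-}$ gives, for $k=0,\dots,3$,
\[
|D^kH(x)|\le C|x|^{\alpha-k}\ln|x|.
\]
Here we use $\beta_-<\alpha$, and the derivative of the logarithm contributes only lower-order terms $|x|^{\alpha-k}$. For the cutoff, each derivative of $\xi_R$ costs a factor $R^{-1}\sim|x|^{-1}$ on the annulus, together with a power $\xi^{\ell-j}(|x|/R)$. Collecting the terms yields
\[
|\mathcal{L}_{\mu^*}(H\xi_R)(x)|\le C|x|^{\alpha-4}\ln|x|\,\xi^{\ell-4}\!\left(\tfrac{|x|}{R}\right),\qquad \tfrac R2<|x|<R.
\]

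For the lower bound we use $H(x)\ge c\,|x|^\alpha\ln|x|$ for $|x|\gg1$. This holds because $d_{\mu^*}>0$ (shown in the proof of Lemma~\ref{L2.3}) and because $|x|^{\alpha}$ and $|x|^{\beta_-}$ are of lower order. Taking $\ell\ge4p'$, so that $\xi^{(\ell-4)p'-\ell(p'-1)}=\xi^{\ell-4p'}\le1$, the integrand is bounded by
\[
C\frac{\big(|x|^{\alpha-4}\ln|x|\big)^{p'}}{\big(|x|^\alpha\ln|x|\big)^{p'-1}}|x|^{-\frac{\sigma}{p-1}}
= C|x|^{\alpha-4p'-\frac{\sigma}{p-1}}\ln|x|.
\]
Integrating over the annulus, where $\ln|x|\le\ln R$, gives $C R^{N+\alpha-4p'-\frac{\sigma}{p-1}}\ln R$.

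The main obstacle is the pointwise control near the boundary of the support, where $\xi_R$ vanishes. This is handled, as in Lemma~\ref{L3.2}, by the powers $\xi^{\ell-4}$ in the numerator. The other delicate point is the positive lower bound $H\gtrsim|x|^\alpha\ln|x|$, which must hold uniformly on the annulus for $R\gg1$; it follows from $d_{\mu^*}>0$ as explained above.
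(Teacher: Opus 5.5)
Your proposal is correct and follows the same route as the paper, which only says to repeat the proof of Lemma~\ref{L3.2} with $H(x)\sim|x|^\alpha\ln|x|$. You spell out the details: the annulus reduction via $\mathcal{L}_{\mu^*}H=0$, the bounds $|D^kH|\le C|x|^{\alpha-k}\ln|x|$, the lower bound $H\ge c|x|^\alpha\ln|x|$ from $d_{\mu^*}>0$, and the resulting single factor $\ln|x|$ after dividing by $(H\xi_R)^{p'-1}$.
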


\begin{proof}
The proof follows the same lines as in Lemma~\ref{L3.2}, taking into account that, in the case $\mu = \mu^*$, the function $H$ satisfies
\[
H(x) \sim |x|^\alpha \ln |x|
\qquad \text{for } |x| \gg 1,
\]
which introduces an additional logarithmic factor.
\end{proof}

\subsection{Integral estimates involving $\varrho_T$}

\begin{Lem}\label{L3.7}
For $\ell, T \gg 1$, one has
\[
\int_0^\infty \frac{|\varrho_T'(t)|^{p'}}{\varrho_T(t)^{p'-1}} \, dt
\le
C T^{1 - p'}.
\]
\end{Lem}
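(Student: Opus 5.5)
The plan is to compute directly from the definition $\varrho_T(t)=\varrho^\ell(t/T)$ and then rescale time. Since $\supp\varrho\subset(0,1)$, the function $\varrho_T$ is supported in $(0,T)$. On that set we differentiate:
\[
\varrho_T'(t)=\frac{\ell}{T}\,\varrho^{\ell-1}\!\left(\frac{t}{T}\right)\varrho'\!\left(\frac{t}{T}\right).
\]
Consequently, wherever $\varrho(t/T)>0$,
\[
\frac{|\varrho_T'(t)|^{p'}}{\varrho_T(t)^{p'-1}}
=\ell^{p'}T^{-p'}\,\varrho^{(\ell-1)p'-\ell(p'-1)}\!\left(\frac{t}{T}\right)\left|\varrho'\!\left(\frac{t}{T}\right)\right|^{p'}
=\ell^{p'}T^{-p'}\,\varrho^{\ell-p'}\!\left(\frac{t}{T}\right)\left|\varrho'\!\left(\frac{t}{T}\right)\right|^{p'}.
\]
At points where $\varrho_T(t)=0$, the integrand is understood to be $0$. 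This is the standard convention, and it is consistent because $\varrho_T'$ also vanishes there once $\ell>1$.

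Next we substitute $s=t/T$, so that $dt=T\,ds$. This gives
\[
\int_0^\infty \frac{|\varrho_T'(t)|^{p'}}{\varrho_T(t)^{p'-1}}\,dt
=\ell^{p'}\,T^{1-p'}\int_0^1 \varrho^{\ell-p'}(s)\,|\varrho'(s)|^{p'}\,ds .
\]
It remains to show that the last integral is a finite constant independent of $T$. We choose $\ell>p'$. Then $\varrho^{\ell-p'}$ is bounded, because $\varrho$ is continuous and compactly supported in $(0,1)$. The factor $|\varrho'|^{p'}$ is bounded for the same reason. The integrand is therefore bounded on $[0,1]$, and the integral is some finite $C$ depending only on $\varrho$, $\ell$ and $p$. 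This yields the bound $C\,T^{1-p'}$.

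The only delicate point is the set where $\varrho$ vanishes, since the denominator $\varrho_T^{p'-1}$ is zero there. The choice $\ell\ge p'$, which is what ``$\ell\gg1$'' provides, removes the singularity: after cancellation the exponent $\ell-p'$ is nonnegative. Beyond this, the argument is only the scaling computation, and nothing else in the paper is needed.
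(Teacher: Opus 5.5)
Your proof is correct and follows the paper's argument essentially verbatim. You differentiate $\varrho_T=\varrho^\ell(t/T)$, cancel powers to get $\ell^{p'}T^{-p'}\varrho^{\ell-p'}|\varrho'|^{p'}$, rescale $s=t/T$, and use $\ell\ge p'$ so that the remaining integral is a finite constant; your explicit convention on the zero set of $\varrho_T$ is a small clarification the paper leaves implicit.
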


\begin{proof}
Since $\supp \varrho \subset (0,1)$, we have $\supp \varrho_T \subset (0,T)$. Hence,
\[
\int_0^\infty \frac{|\varrho_T'(t)|^{p'}}{\varrho_T(t)^{p'-1}} \, dt
=
\int_0^T \frac{|\varrho_T'(t)|^{p'}}{\varrho_T(t)^{p'-1}} \, dt.
\]
Moreover,
\[
\varrho_T'(t)
=
\frac{\ell}{T} \, \varrho^{\ell-1}\!\left(\frac{t}{T}\right) \varrho'\!\left(\frac{t}{T}\right).
\]
Thus,
\[
\int_0^T \frac{|\varrho_T'(t)|^{p'}}{\varrho_T(t)^{p'-1}} \, dt
\le
C T^{-p'} \int_0^T
\varrho^{\ell - p'}\!\left(\frac{t}{T}\right)
\left| \varrho'\!\left(\frac{t}{T}\right) \right|^{p'} \, dt.
\]
Using the change of variables $s = t/T$, we obtain (for $\ell \ge p'$)
\[
\begin{aligned}
\int_0^\infty \frac{|\varrho_T'(t)|^{p'}}{\varrho_T(t)^{p'-1}} \, dt
&\le
C T^{1 - p'} \int_0^1 \varrho^{\ell - p'}(s) |\varrho'(s)|^{p'} \, ds \\
&\le
C T^{1 - p'}.
\end{aligned}
\]
The proof is complete.
\end{proof}

\begin{Lem}\label{lem:rhoT}
For $\ell, T \gg 1$, one has
\[
\int_0^\infty \varrho_T(t)\, dt = C T.
\]
\end{Lem}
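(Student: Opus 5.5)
The plan is a direct change of variables. By definition, $\varrho_T(t)=\varrho^\ell(t/T)$, and $\supp\varrho\subset(0,1)$ gives $\supp\varrho_T\subset(0,T)$. Substituting $s=t/T$, so that $dt=T\,ds$, we obtain
\[
\int_0^\infty \varrho_T(t)\,dt = T\int_0^\infty \varrho^\ell(s)\,ds = T\int_0^1 \varrho^\ell(s)\,ds .
\]
The statement then holds with
\[
C=\int_0^1\varrho^\ell(s)\,ds .
\]

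It remains to check that this constant has the properties the section's convention requires. It is independent of $T$ and $R$, since it depends only on the fixed profile $\varrho$ and on $\ell$. It is finite, because $\varrho$ is continuous with compact support in $(0,1)$. It is strictly positive, because $\varrho\ge0$ and $\varrho\not\equiv0$: by continuity $\varrho^\ell>0$ on some open subinterval of $(0,1)$, so the integral cannot vanish.

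There is no real obstacle. The only point to watch is that $C$ depends on $\ell$. Since $\ell$ is fixed once and for all (large enough for Lemmas~\ref{L3.2}, \ref{L3.4} and~\ref{L3.7}) before letting $T,R\to\infty$, this dependence is harmless and consistent with how $C$ is used elsewhere in the paper.
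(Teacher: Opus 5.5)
Your proof is correct and is the same argument the paper has in mind; the paper only says the identity ``follows directly from the definition of $\varrho_T$.'' You supply the omitted details: the substitution $s=t/T$ gives $C=\int_0^1\varrho^\ell(s)\,ds\in(0,\infty)$, and positivity of this constant is what the later lower bound $\int_D f\varphi\,dx\,dt\ge CT$ relies on.
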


\begin{proof}
The result follows directly from the definition of $\varrho_T$.
\end{proof}

\section{Proofs of the main results}\label{sec4}

This section is devoted to the proofs of Theorems~\ref{T1.2} and \ref{T1.3}.

\begin{proof}[Proof of Theorem~\ref{T1.2}]
\textbf{Part (i).} Assume that one of the conditions \eqref{cd-nex} or \eqref{cd-nex-cr} holds. By contradiction, suppose that problem \eqref{P}--\eqref{BC} admits a weak solution $u \in L^p_{\mathrm{loc}}(D)$ for some positive function $f \in L^1_{\mathrm{loc}}(\overline{\Omega_e})$.

By Definition~\ref{def-ws}, for all $\varphi \in \Phi$, one has
\begin{equation}\label{step1-T1}
\int_D |x|^{\sigma} |u|^p \varphi \, dx\, dt
+ \int_D f(x)\, \varphi \, dx\, dt
\le
\int_D |u|\, |\varphi_t| \, dx\, dt
+ \int_D |u|\, |\mathcal{L}_\mu \varphi| \, dx\, dt.
\end{equation}
By Young's inequality, one has
\[
\int_D |u|\, |\varphi_t| \, dx\, dt
\le
\frac{1}{2} \int_D |x|^{\sigma} |u|^p \varphi \, dx\, dt
+ C \int_D \frac{|\varphi_t|^{p'}}{\varphi^{p'-1}} \, |x|^{-\frac{\sigma}{p-1}} \, dx\, dt.
\]
Similarly,
\[
\int_D |u|\, |\mathcal{L}_\mu \varphi| \, dx\, dt
\le
\frac{1}{2} \int_D |x|^{\sigma} |u|^p \varphi \, dx\, dt
+ C \int_D \frac{|\mathcal{L}_\mu \varphi|^{p'}}{\varphi^{p'-1}} \, |x|^{-\frac{\sigma}{p-1}} \, dx\, dt.
\]

Substituting the above estimates into \eqref{step1-T1}, we obtain
\begin{equation}\label{ap-est}
\int_D f(x)\, \varphi \, dx\, dt
\le
C \left( \mathcal{C}_1(\varphi) + \mathcal{C}_2(\varphi) \right),
\end{equation}
where
\[
\mathcal{C}_1(\varphi)
=
\int_D \frac{|\varphi_t|^{p'}}{\varphi^{p'-1}} \, |x|^{-\frac{\sigma}{p-1}} \, dx\, dt,
\qquad
\mathcal{C}_2(\varphi)
=
\int_D \frac{|\mathcal{L}_\mu \varphi|^{p'}}{\varphi^{p'-1}} \, |x|^{-\frac{\sigma}{p-1}} \, dx\, dt,
\]
provided that $\mathcal{C}_1(\varphi)$ and $\mathcal{C}_2(\varphi)$ are finite.

We distinguish three cases.

\noindent \textbf{Case 1.} $0 < \mu < \mu^*$ and $1 < p < 1 + \frac{\sigma + 4}{\mu_N}$. In this case, for $\ell, T, R \gg 1$, we consider the function $\varphi$ defined by \eqref{test1}, namely,
\[
\varphi(t,x) = \varrho_T(t)\, H_R(x), \qquad (t,x) \in D,
\]
where
\[
H_R(x) = H(x)\, \xi_R(x), \qquad x \in \overline{\Omega_e},
\]
and $H$ is given by \eqref{H-f}.

We first estimate $\mathcal{C}_1(\varphi)$. By definition of $\varphi$, we obtain
\[
\mathcal{C}_1(\varphi)
=
\mathcal{C}_{11}(T)\, \mathcal{C}_{12}(R),
\]
where
\[
\mathcal{C}_{11}(T)
=
\int_0^\infty
\frac{|\varrho_T'(t)|^{p'}}{\varrho_T(t)^{p'-1}} \, dt
\]
and
\[
\mathcal{C}_{12}(R)
=
\int_{\Omega_e}
H(x)\, \xi_R(x)\, |x|^{-\frac{\sigma}{p-1}} \, dx.
\]
By Lemma~\ref{L3.7}, one has
\[
\mathcal{C}_{11}(T) \le C T^{1 - p'}.
\]
Moreover, by Lemma~\ref{L3.1}, one has
\[
\mathcal{C}_{12}(R)
\le
C \left( \ln R + R^{N + \alpha_+ - \frac{\sigma}{p-1}} \right).
\]
Consequently,
\begin{equation}\label{C1-case1}
\mathcal{C}_1(\varphi)
\le
C T^{1 - p'}
\left( \ln R + R^{N + \alpha_+ - \frac{\sigma}{p-1}} \right).
\end{equation}

We next estimate $\mathcal{C}_2(\varphi)$. By definition of $\varphi$, we obtain
\[
\mathcal{C}_2(\varphi)
=
\mathcal{C}_{21}(T)\, \mathcal{C}_{22}(R),
\]
where
\[
\mathcal{C}_{21}(T)
=
\int_0^\infty \varrho_T(t)\, dt
\]
and
\[
\mathcal{C}_{22}(R)
=
\int_{\Omega_e}
\frac{\left| \mathcal{L}_\mu H_R(x) \right|^{p'}}
{H_R(x)^{p'-1}}
\, |x|^{-\frac{\sigma}{p-1}} \, dx.
\]
By Lemmas~\ref{lem:rhoT} and~\ref{L3.2}, one has
\[
\mathcal{C}_{21}(T) = C T
\]
and
\[
\mathcal{C}_{22}(R)
\le
C R^{N + \alpha_+ - 4p' - \frac{\sigma}{p-1}}.
\]
Therefore,
\begin{equation}\label{C2-case1}
\mathcal{C}_2(\varphi)
\le
C T R^{N + \alpha_+ - 4p' - \frac{\sigma}{p-1}}.
\end{equation}

Combining \eqref{ap-est}, \eqref{C1-case1}, and \eqref{C2-case1}, we obtain
\begin{equation}\label{res-case1}
\int_D f(x)\, \varphi \, dx\, dt
\le
C \left[
T^{1 - p'} \left( \ln R + R^{N + \alpha_+ - \frac{\sigma}{p-1}} \right)
+ T R^{N + \alpha_+ - 4p' - \frac{\sigma}{p-1}}
\right].
\end{equation}

We now estimate the left-hand side of \eqref{res-case1}. By definition of $\varphi$, we have
\[
\int_D f(x)\, \varphi(t,x)\, dx\, dt
=
\left( \int_0^\infty \varrho_T(t)\, dt \right)
\left( \int_{\Omega_e} f(x)\, H(x)\, \xi_R(x)\, dx \right).
\]
By Lemma~\ref{lem:rhoT}, we obtain
\[
\int_D f(x)\, \varphi(t,x)\, dx\, dt
=
C T
\left( \int_{\Omega_e} f(x)\, H(x)\, \xi_R(x)\, dx \right).
\]
Moreover, since $f > 0$, $f \in L^1_{\mathrm{loc}}(\overline{\Omega_e})$, $H \ge 0$ (by Lemma~\ref{L2.2}), and $\xi_R(x) = 1$ for $1 < |x| < R/2$, we have
\[
\int_{\Omega_e} f(x)\, H(x)\, \xi_R(x)\, dx
\ge
\int_{1 < |x| < R/2} f(x)\, H(x)\, dx.
\]
Fix $\tau > 2$ such that $R > 2\tau$. Then
\[
\int_{1 < |x| < R/2} f(x)\, H(x)\, dx
\ge
\int_{2 < |x| < \tau} f(x)\, H(x)\, dx
> 0.
\]
Therefore,
\[
\int_D f(x)\, \varphi(t,x)\, dx\, dt \ge C T.
\]

Then, by \eqref{res-case1}, we deduce that
\[
T
\le
C \left[
T^{1 - p'} \left( \ln R + R^{N + \alpha_+ - \frac{\sigma}{p-1}} \right)
+ T R^{N + \alpha_+ - 4p' - \frac{\sigma}{p-1}}
\right].
\]
Dividing by $T$, we obtain
\[
1
\le
C \left[
T^{-p'} \left( \ln R + R^{N + \alpha_+ - \frac{\sigma}{p-1}} \right)
+ R^{N + \alpha_+ - 4p' - \frac{\sigma}{p-1}}
\right].
\]
On the other hand, by the definition of $\alpha_+$, we have
\[
N + \alpha_+ - 4p' - \frac{\sigma}{p-1}
=
\mu_N - \frac{\sigma + 4}{p-1}.
\]
Therefore, taking $T = R^\theta$, the above estimate becomes
\begin{equation}\label{OK-case1}
1
\le
C \left[
R^{-\theta p'} \left( \ln R + R^{N + \alpha_+ - \frac{\sigma}{p-1}} \right)
+ R^{\mu_N - \frac{\sigma + 4}{p-1}}
\right].
\end{equation}
Since
\[
1 < p < 1 + \frac{\sigma + 4}{\mu_N},
\]
we have
\[
\mu_N - \frac{\sigma + 4}{p-1} < 0.
\]
Moreover, choosing
\[
\theta > \max \left\{ 0, \frac{N + \alpha_+ - \frac{\sigma}{p-1}}{p'} \right\},
\]
we obtain
\[
R^{-\theta p'} \left( \ln R + R^{N + \alpha_+ - \frac{\sigma}{p-1}} \right) \to 0
\qquad \text{as } R \to \infty.
\]
Hence, passing to the limit as $R \to \infty$ in \eqref{OK-case1}, we obtain a contradiction. This shows that problem \eqref{P}--\eqref{BC} admits no weak solution for $0<\mu<\mu^*$ and $1 < p < 1 + \frac{\sigma + 4}{\mu_N}$.

\noindent \textbf{Case 2.} $0 < \mu < \mu^*$ and $p = 1 + \frac{\sigma + 4}{\mu_N}$. In this case, for $\ell, T, R \gg 1$, we consider the function $\varphi$ defined by \eqref{test2}, namely,
\[
\varphi(t,x) = \varrho_T(t)\, \overline{H}_R(x), \qquad (t,x) \in D,
\]
where
\[
\overline{H}_R(x) = H(x)\, \zeta_R(x), \qquad x \in \overline{\Omega_e},
\]
and $H$ is given by \eqref{H-f}.

Proceeding as in the previous case and using Lemmas~\ref{L3.7} and~\ref{L3.3}, we obtain
\begin{equation}\label{C1-case2}
\mathcal{C}_1(\varphi)
\le
C T^{1 - p'}
\left( \ln R + R^{N + \alpha_+ - \frac{\sigma}{p-1}} \right).
\end{equation}
Using Lemmas~\ref{lem:rhoT} and~\ref{L3.4}, and recalling that
\[
N - 4 + \alpha_+ = \mu_N,
\]
we obtain
\begin{equation}\label{C2-case2}
\mathcal{C}_2(\varphi)
\le
C T (\ln R)^{1 - p'}.
\end{equation}

Combining \eqref{ap-est}, \eqref{C1-case2}, and \eqref{C2-case2}, we obtain
\[
\int_D f(x)\, \varphi \, dx\, dt
\le
C \left[
T^{1 - p'} \left( \ln R + R^{N + \alpha_+ - \frac{\sigma}{p-1}} \right)
+ T (\ln R)^{1 - p'}
\right].
\]
Arguing as in the previous case and using the positivity of $f$ and $H$, we obtain
\[
\int_D f(x)\, \varphi(t,x)\, dx\, dt \ge C T.
\]
Hence,
\[
1
\le
C \left[
T^{-p'} \left( \ln R + R^{N + \alpha_+ - \frac{\sigma}{p-1}} \right)
+ (\ln R)^{1 - p'}
\right].
\]
Choosing $T = R^\theta$ with $\theta > 0$ sufficiently large, and passing to the limit as $R \to \infty$, we obtain a contradiction. This proves the nonexistence in the critical case.

\noindent \textbf{Case 3.} $\mu = \mu^*$ and $1<p < 1 + \frac{\sigma + 4}{\mu_N}$. For $\ell, T, R \gg 1$, we consider the function $\varphi$ defined by \eqref{test1}, namely,
\[
\varphi(t,x) = \varrho_T(t)\, H_R(x), \qquad (t,x) \in D,
\]
where
\[
H_R(x) = H(x)\, \xi_R(x), \qquad x \in \overline{\Omega_e},
\]
and $H$ is given by \eqref{H-critical}.

Using Lemmas~\ref{L3.7} and~\ref{L3.5}, we obtain
\[
\mathcal{C}_1(\varphi)
\le
C T^{1 - p'}\, \ln R
\left( \ln R + R^{N + \alpha - \frac{\sigma}{p-1}} \right).
\]
Moreover, by Lemmas~\ref{lem:rhoT} and~\ref{L3.6}, one has
\[
\mathcal{C}_2(\varphi)
\le
C T\, R^{N + \alpha - 4p' - \frac{\sigma}{p-1}}\, \ln R.
\]
Furthermore, arguing as in the previous cases, we obtain
\[
\int_D f(x)\, \varphi(t,x)\, dx\, dt \ge C T.
\]

Combining the above estimates with \eqref{ap-est}, we obtain
\[
1
\le
C \left[
T^{-p'}\, \ln R
\left( \ln R + R^{N + \alpha - \frac{\sigma}{p-1}} \right)
+
R^{N + \alpha - 4p' - \frac{\sigma}{p-1}}\, \ln R
\right].
\]
On the other hand, by the definition of the double root $\alpha$, one has
\[
1 < p < 1 + \frac{\sigma + 4}{\mu_N}
\quad \Longleftrightarrow \quad
N + \alpha - 4p' - \frac{\sigma}{p-1} < 0.
\]
Hence, choosing $T = R^\theta$ with $\theta > 0$ sufficiently large and letting $R \to \infty$ in the above estimate, we obtain a contradiction. This completes the proof in this case.\\

\textbf{Part (ii).} Let $p$ satisfy \eqref{cd-ex}. We treat separately the cases $0 < \mu < \mu^*$ and $\mu = \mu^*$.

\noindent \textbf{Case 1.} $0 < \mu < \mu^*$. Let $q$ be such that
\[
\max\left\{ -\alpha_+, \frac{\sigma + 4}{p-1} \right\} < q < \mu_N.
\]
Such a choice is possible. Indeed, since
\[
p > 1 + \frac{\sigma + 4}{\mu_N},
\]
we have
\[
\frac{\sigma + 4}{p-1} < \mu_N.
\]
Moreover,
\[
-\alpha_+ < -\alpha_- = \mu_N.
\]
Hence,
\[
\max\left\{ -\alpha_+, \frac{\sigma + 4}{p-1} \right\} < \mu_N,
\]
which ensures that such a choice of $q$ exists.

We define
\[
V_q(x) = |x|^{-q} + A_q |x|^{\alpha_-} + B_q |x|^{\beta_-},
\qquad |x| \ge 1,
\]
where
\[
A_q = \frac{M(\beta_-) - M(-q)}{M(\alpha_-) - M(\beta_-)},
\qquad
B_q = \frac{M(-q) - M(\alpha_-)}{M(\alpha_-) - M(\beta_-)},
\]
and $M$ is defined by  \eqref{Mlambda}.

A direct computation shows that
\[
V_q(x) = \Delta V_q(x) = 0, \qquad x \in \partial B_1,
\]
and
\[
\mathcal{L}_\mu V_q(x) = P(-q)\, |x|^{-q-4}, \qquad |x| > 1,
\]
where $P$ is the polynomial defined by \eqref{Plambda}. Moreover, since $\alpha_- < -q < \alpha_+$ and $P$ has simple roots ordered as
\[
\beta_- < \alpha_- < \alpha_+ < 0 < \beta_+,
\]
we have
\[
P(-q) > 0.
\]

Now, we choose $0 < \varepsilon < \varepsilon_0$, where
\begin{equation}\label{eps0}
\varepsilon_0
=
\left(
\frac{P(-q)}
{\displaystyle \sup_{|y| \ge 1} |y|^{\sigma + q + 4}\, |V_q(y)|^p}
\right)^{\frac{1}{p-1}}.
\end{equation}
The above supremum is finite. Indeed, we have
\[
V_q(x) \sim |x|^{-q}
\qquad \text{as } |x| \to \infty.
\]
Therefore,
\[
|x|^{\sigma + q + 4} |V_q(x)|^p
\sim
|x|^{\sigma + q + 4 - pq}
=
|x|^{\sigma + 4 - q(p-1)}.
\]
Since
\[
q > \frac{\sigma + 4}{p-1},
\]
it follows that
\[
\sigma + 4 - q(p-1) < 0,
\]
and hence
\[
|x|^{\sigma + q + 4} |V_q(x)|^p \to 0
\qquad \text{as } |x| \to \infty.
\]
Moreover, the function $x \mapsto |x|^{\sigma + q + 4} |V_q(x)|^p$ is continuous on $\{ |x| \ge 1 \}$. Therefore,
\[
\sup_{|y| \ge 1} |y|^{\sigma + q + 4} |V_q(y)|^p < \infty.
\]

For such $\varepsilon$, we define
\begin{equation}\label{ueps}
u_\varepsilon(x) = \varepsilon V_q(x), \qquad |x| \ge 1,
\end{equation}
and
\[
f_\varepsilon(x) = \mathcal{L}_\mu u_\varepsilon(x) - |x|^{\sigma} |u_\varepsilon(x)|^p, \qquad |x| > 1.
\]
By the definition of $V_q$ and the choice of $\varepsilon$, for all $x$ with $|x| > 1$, we have
\[
\begin{aligned}
f_\varepsilon(x)
&=
\varepsilon |x|^{-q-4}
\left[
P(-q)
- \varepsilon^{p-1} |x|^{\sigma + q + 4} |V_q(x)|^p
\right] \\
&\ge
\varepsilon |x|^{-q-4}
\left[
P(-q)
- \varepsilon^{p-1} \sup_{|y| \ge 1} |y|^{\sigma + q + 4} |V_q(y)|^p
\right] \\
&> 0.
\end{aligned}
\]

We conclude that $u_\varepsilon$ is a stationary solution of \eqref{P}--\eqref{BC} corresponding to the positive source $f_\varepsilon$. Moreover, $u_\varepsilon \in C^\infty(\overline{\Omega_e})$. 

\noindent \textbf{Case 2.} $\mu = \mu^*$. We choose $q$ such that
\[
\frac{\sigma + 4}{p-1} < q < -\alpha.
\]
This choice is possible since
\[
p > 1 + \frac{\sigma + 4}{\mu_N}
\quad \Longleftrightarrow \quad
\frac{\sigma + 4}{p-1} < \mu_N = -\alpha.
\]

We define
\[
V_q(x) = -|x|^{-q} + |x|^\alpha + B_q |x|^\alpha \ln |x|,
\qquad |x| \ge 1,
\]
where
\[
B_q = \frac{M(-q) - M(\alpha)}{2},
\]
and $M$ is defined by \eqref{Mlambda}.

A direct computation shows that
\[
V_q(x) = \Delta V_q(x) = 0, \qquad x \in \partial B_1,
\]
and
\[
\mathcal{L}_\mu V_q(x) = -P(-q)\, |x|^{-q-4}, \qquad |x| > 1,
\]
where $P$ is the polynomial defined by \eqref{Plambda}. Moreover, since $\alpha$ is a double root of $P$ and $\alpha < -q < 0$, we have
\[
P(-q) < 0.
\]

Now, we choose $0 < \varepsilon < \varepsilon_0$, where
\begin{equation}\label{eps0-2}
\varepsilon_0
=
\left(
\frac{-P(-q)}
{\displaystyle \sup_{|y| \ge 1} |y|^{\sigma + q + 4} |V_q(y)|^p}
\right)^{\frac{1}{p-1}}.
\end{equation}
Under the above condition on $q$, the supremum is finite. Indeed, by the definition of $V_q$, we have
\[
|x|^{\sigma + q + 4} |V_q(x)|^p
\sim
|x|^{\sigma + 4 - q(p-1)}
\qquad \text{as } |x| \to \infty.
\]
Since
\[
q > \frac{\sigma + 4}{p-1},
\]
it follows that
\[
|x|^{\sigma + q + 4} |V_q(x)|^p \to 0
\qquad \text{as } |x| \to \infty.
\]

For such $\varepsilon$, we consider the function
\begin{equation}\label{ueps-2}
u_\varepsilon(x) = \varepsilon V_q(x), \qquad |x| \ge 1.
\end{equation}
Next, we define
\[
f_\varepsilon(x) = \mathcal{L}_\mu u_\varepsilon(x) - |x|^{\sigma} |u_\varepsilon(x)|^p, \qquad |x| > 1.
\]
By the definition of $V_q$ and the choice of $\varepsilon$, for all $x$ with $|x| > 1$, we have
\[
\begin{aligned}
f_\varepsilon(x)
&=
\varepsilon |x|^{-q-4}
\left[
- P(-q)
- \varepsilon^{p-1} |x|^{\sigma + q + 4} |V_q(x)|^p
\right] \\
&\ge
\varepsilon |x|^{-q-4}
\left[
- P(-q)
- \varepsilon^{p-1} \sup_{|y| \ge 1} |y|^{\sigma + q + 4} |V_q(y)|^p
\right] \\
&> 0.
\end{aligned}
\]
Consequently, $u_\varepsilon$ is a stationary solution of \eqref{P}--\eqref{BC} corresponding to the positive source $f_\varepsilon$. Moreover, $u_\varepsilon \in C^\infty(\overline{\Omega_e})$. This completes the proof of part (ii).
\end{proof}

\begin{proof}[Proof of Theorem~\ref{T1.3}] \textbf{Part (i).} Let $\eta$ satisfy \eqref{cd-nex-eta}. By contradiction, suppose that problem \eqref{P}--\eqref{BC} admits a weak solution $u \in L^p_{\mathrm{loc}}(D)$ for some $f \in \mathcal{F}^+_{\eta}$.

\noindent \textbf{Case 1.} $0 < \mu < \mu^*$. Following the same steps as in the proof of part (i) of Theorem~\ref{T1.2}, Case 1, we obtain \eqref{res-case1}, where
\[
\int_D f(x)\, \varphi(t,x)\, dx\, dt
=
C T
\left( \int_{\Omega_e} f(x)\, H(x)\, \xi_R(x)\, dx \right), 
\]
and $H$ is given by \eqref{H-f}. 

Since $f \in \mathcal{F}^+_{\eta}$, there exist $c > 0$ and $R_0 > 1$ such that
\[
f(x) \ge c |x|^{-\eta}
\qquad \text{for } |x| \ge R_0.
\]
For $R \gg 1$, using that $\xi_R = 1$ on $\{ R/4 < |x| < R/2 \}$ and that $H(x) \sim |x|^{\alpha_+}$, we obtain
\[
\int_{\Omega_e} f(x)\, H(x)\, \xi_R(x)\, dx
\ge
C \int_{R/4 < |x| < R/2} |x|^{\alpha_+ - \eta} \, dx
\ge
C R^{N + \alpha_+ - \eta}.
\]
Hence,
\[
\int_D f(x)\, \varphi(t,x)\, dx\, dt
\ge
C T R^{N + \alpha_+ - \eta}.
\]
Substituting this estimate into \eqref{res-case1} and dividing by $T R^{N + \alpha_+ - \eta}$, we obtain
\[
1
\le
C \left[
T^{-p'} R^{-N - \alpha_+ + \eta}
\left( \ln R + R^{N + \alpha_+ - \frac{\sigma}{p-1}} \right)
+ R^{\eta - 4p' - \frac{\sigma}{p-1}}
\right].
\]
Since $\eta$ satisfies \eqref{cd-nex-eta}, we have
\[
\eta - 4p' - \frac{\sigma}{p-1} < 0.
\]
Thus, taking $T = R^\theta$, with $\theta > 0$ sufficiently large, and letting $R \to \infty$, we obtain a contradiction.

\noindent \textbf{Case 2.} $\mu = \mu^*$. Following the same steps as in the proof of part (i) of Theorem~\ref{T1.2}, Case 3,  we obtain
\[
\int_{\Omega_e} f(x)\, H(x)\, \xi_R(x)\, dx
\le
C \left[
T^{- p'}\, \ln R
\left( \ln R + R^{N + \alpha - \frac{\sigma}{p-1}} \right)
+ R^{N + \alpha - 4p' - \frac{\sigma}{p-1}}\, \ln R
\right],
\]
where $H$ is given by \eqref{H-critical}. 

Using
\[
H(x) \sim |x|^\alpha \ln |x|
\qquad \text{as } |x| \to \infty,
\]
the same argument as above yields
\[
\int_{\Omega_e} f(x)\, H(x)\, \xi_R(x)\, dx
\ge
C R^{N + \alpha - \eta} \ln R,
\]
for $R \gg 1$.

Combining the above estimates, we obtain
\[
1
\le
C \left[
T^{-p'} R^{-N - \alpha + \eta}
\left( \ln R + R^{N + \alpha - \frac{\sigma}{p-1}} \right)
+ R^{\eta - 4p' - \frac{\sigma}{p-1}}
\right].
\]
The proof then proceeds exactly as in Case 1.

\textbf{Part (ii).}  Assume now that $\eta$ satisfies \eqref{cd-ex-eta}. As before, we consider two cases.

\noindent \textbf{Case 1.} $0 < \mu < \mu^*$. We choose $q$ such that
\[
\max\left\{ -\alpha_+, \frac{\sigma + 4}{p-1}, \eta - 4 \right\} < q < \mu_N.
\]
Such a choice is possible since
\[
p > 1 + \frac{\sigma + 4}{\mu_N}, \qquad
-\alpha_+ < -\alpha_- = \mu_N, \qquad
\eta < \mu_N + 4.
\]

For such $q$ and $0 < \varepsilon < \varepsilon_0$ (see \eqref{eps0}), we define $u_\varepsilon$ by \eqref{ueps} and set
\[
f_\varepsilon(x)
=
\mathcal{L}_\mu u_\varepsilon(x) - |x|^{\sigma} |u_\varepsilon(x)|^p,
\qquad x \in \overline{\Omega_e}.
\]
From the proof of Theorem~\ref{T1.2}, part (ii), Case 1, we have
\[
f_\varepsilon(x)
=
\varepsilon |x|^{-q-4}
\left[
P(-q)
- \varepsilon^{p-1} |x|^{\sigma + q + 4} |V_q(x)|^p
\right]
> 0,
\]
and $f_\varepsilon \in C(\overline{\Omega_e})$.

Moreover, since $P(-q) > 0$ and $-q - 4 < -\eta$, we obtain
\[
f_\varepsilon(x) \le \varepsilon P(-q)\, |x|^{-\eta}, \qquad |x| \ge 1,
\]
so that $f_\varepsilon \in \mathcal{F}^-_{\eta}$. Hence $u_\varepsilon$ is a stationary solution of \eqref{P}--\eqref{BC} with source $f_\varepsilon \in \mathcal{F}^-_{\eta}$, and $u_\varepsilon \in C^\infty(\overline{\Omega_e})$.

\medskip

\noindent \textbf{Case 2.} $\mu = \mu^*$. We choose $q$ such that
\[
\max\left\{ \frac{\sigma + 4}{p-1},\, \eta - 4 \right\} < q < \mu_N.
\]

Proceeding as in Case 1, with $\varepsilon_0$ given by \eqref{eps0-2} and $u_\varepsilon$ defined by \eqref{ueps-2}, we obtain
\[
f_\varepsilon(x)
=
\varepsilon |x|^{-q-4}
\left[
- P(-q)
- \varepsilon^{p-1} |x|^{\sigma + q + 4} |V_q(x)|^p
\right]
> 0,
\]
for all $x \in \overline{\Omega_e}$, and $f_\varepsilon \in C(\overline{\Omega_e})$.

Since $P(-q) < 0$ and $-q - 4 < -\eta$, it follows that
\[
f_\varepsilon(x) \le \varepsilon (-P(-q))\, |x|^{-\eta}, \qquad |x| \ge 1,
\]
so that $f_\varepsilon \in \mathcal{F}^-_{\eta}$. 

This completes the proof of part (ii).

\end{proof}

\medskip 

\noindent{\bf Declaration of competing interest.}
The Authors declare that there is no conflict of interest.

\medskip

\noindent{\bf Funding.}
The third author is supported by Ongoing Research Funding Program, 
(ORF-2026-4), King Saud University, Riyadh, Saudi Arabia.

\end{document}